\title{Dyadic Torsion of Elliptic Curves}
\author{Jeffrey Yelton}
\newtheorem{thm}{Theorem}[section]
\newtheorem{prop}[thm]{Proposition}
\newtheorem{lemma}[thm]{Lemma}
\newtheorem{dfn}[thm]{Definition}
\newtheorem{rmk}[thm]{Remark}
\newcommand{\qq}{\mathbb{Q}}
\newcommand{\zz}{\mathbb{Z}}
\newcommand{\Gal}{\mathrm{Gal}}
\newcommand{\GL}{\mathrm{GL}}
\newcommand{\SL}{\mathrm{SL}}
\newcommand{\Aut}{\mathrm{Aut}}
\begin{document}

\maketitle

\begin{abstract}

Let $k$ be a field of characteristic $0$, and let $\alpha_{1}$, $\alpha_{2}$, and $\alpha_{3}$ be algebraically independent and transcendental over $k$.  Let $K$ be the transcendental extension of $k$ obtained by adjoining the elementary symmetric functions of the $\alpha_{i}$'s.  Let $E$ be the elliptic curve defined over $K$ which is given by the equation $y^{2} = (x - \alpha_{1})(x - \alpha_{2})(x - \alpha_{3})$.  We define a tower of field extensions $K = K_{0}' \subset K_{1}' \subset K_{2}' \subset ...$ by giving recursive formulas for the generators of each $K_{n}'$ over $K_{n - 1}'$.  We show that $K_{\infty}'$ is a certain central subextension of the field $K(E[2^{\infty}]) := \bigcup_{n = 0}^{\infty} K(E[2^{n}])$, and a generator of $K(E[2^{\infty}])$ over $K_{\infty}'(\mu_{2})$ is given.  Moreover, if we assume that $k$ contains all $2$-power roots of unity, for each $n$, we show that $K(E[2^{n}])$ contains $K_{n}'$ and is contained in a certain quadratic extension of $K_{n + 1}'$.

\end{abstract}

\section{Introduction}

Let $k$ be a field of characteristic $0$ which contains all $2$-power roots of unity.  Let $K$ be the transcendental extension of $k$ obtained by adjoining the coefficients of the cubic polynomial $(x - \alpha_{1})(x - \alpha_{2})(x - \alpha_{3})$, where $\alpha_{1}$, $\alpha_{2}$, and $\alpha_{3}$ are independent and transcendental over $k$.  Fix an algebraic closure $\bar{K}$ of $K$.  Suppose that $E$ is the elliptic curve over $K$ given by the Weierstrass equation
\begin{equation}\label{elliptic model} y^{2} = (x - \alpha_{1})(x - \alpha_{2})(x - \alpha_{3}). \end{equation}
For any integer $n \geq 0$, let $E[2^{n}]$ be the subgroup of $E(\bar{K})$ of $2^{n}$-torsion points, and let $K_{n}$ be the extension of $K$ over which they are defined.  (Note that $K_{0} = K$ and $K_{1} = K(\alpha_{1}, \alpha_{2}, \alpha_{3})$.)  Further, denote by $E[2^{\infty}]$ the subgroup of all $2$-power torsion points and denote by $K_{\infty}$ the minimal (algebraic) extension of $K$ over which they are defined.

In this paper, 
$$T_{2}(E) := \lim_{\leftarrow n} E[2^{n}]$$
will denote the $2$-adic Tate module of $E$; it is a free $\zz_{2}$-module of rank $2$.  Let 
$$V_{2}(E) := T_{2}(E) \otimes \qq_{2}.$$
Then $V_{2}(E)$ is a $2$-dimensional vector space over $\qq_{2}$ which contains the rank-$2$ $\zz_{2}$-lattice $T_{2}(E)$.  Let $\mathcal{L}$ be the set of all $\zz_{2}$-lattices $\Lambda \subset V_{2}(E)$ such that $\Lambda \supset T_{2}(E)$ but $\Lambda \not \supset \frac{1}{2}T_{2}(E)$.  Set $\Lambda_{0} = T_{2}(E)$.  There is an obvious bijection between $\mathcal{L}$ and the set of cyclic subgroups of $E[2^{\infty}]$, given by sending an element $\Lambda \in \mathcal{L}$ to $\Lambda / \Lambda_{0}$, which is canonically identified with a cyclic subgroup $N < E[2^{n}]$ for any $n$ such that $\Lambda \subset \frac{1}{2^{n}}\Lambda_{0}$.  For each $n \geq 0$, we denote by $\mathcal{L}_{n}$ the subset of $\mathcal{L}$ consisting of lattices $\Lambda$ such that $\Lambda / \Lambda_{0}$ is cyclic of order $2^{n}$ (or equivalently, $\Lambda$ corresponds to a maximal cyclic subgroup of $E[2^{n}]$ under the above bijection), and let $\mathcal{L}_{\leq n} := \bigcup_{0 \leq m \leq n} \mathcal{L}_{m}$ and $\mathcal{L}_{\geq n} := \bigcup_{m \geq n} \mathcal{L}_{m}$.

We now endow $\mathcal{L}$ with the structure of a graph by saying that two lattices in $\mathcal{L}$ are connected by an edge if one is contained in the other and the induced quotient of $\zz_{2}$-modules is isomorphic to $\zz / 2\zz$.  It is easy to show that this graph is isomorphic to a certain Bruhat-Tits tree described in \cite{serre2003trees}, \S1.1.  In particular, $\mathcal{L}$ is a $3$-regular tree.  We designate $\Lambda_{0}$ as the root, and observe that for each $n \geq 0$, $\mathcal{L}_{n}$ is the set of all vertices of distance $n$ from the root.  The following properties immediately follow.

\begin{prop}\label{prop properties of L}

a) Every vertex $\Lambda \in \mathcal{L}_{n}$ with $n \geq 1$ has a unique ``parent" vertex; that is, a vertex $\tilde{\Lambda} \in \mathcal{L}_{n - 1}$ which is connected to $\Lambda$ by an edge.  Equivalently, $\tilde{\Lambda}$ is the unique lattice in $\mathcal{L}$ such that $\Lambda \supset \tilde{\Lambda}$ and $\Lambda / \tilde{\Lambda} \cong \zz / 2\zz$.

b) Every vertex $\Lambda \in \mathcal{L}_{n}$ with $n \geq 2$ has a unique ``twin" vertex; that is, a vertex $\Lambda' \in \mathcal{L}_{n}$ such that $\Lambda \neq \Lambda'$ but $\Lambda$ and $\Lambda'$ have the same parent.

\end{prop}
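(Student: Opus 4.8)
The plan is to derive both parts from one elementary observation about lattices in $V_{2}(E)$ together with the structure of cyclic $2$-groups, arranging the argument so that the explicit description of the ``parent'' in part (a) is obtained along the way and then reused in part (b). The observation is: for any $\Lambda \in \mathcal{L}$ the group $\Lambda/\Lambda_{0}$ is cyclic. Indeed $\Lambda/\Lambda_{0}$ is a finite subgroup of $V_{2}(E)/\Lambda_{0} \cong (\qq_{2}/\zz_{2})^{2}$, whose full $2$-torsion subgroup is $\frac{1}{2}\Lambda_{0}/\Lambda_{0} \cong (\zz/2\zz)^{2}$; were $\Lambda/\Lambda_{0}$ non-cyclic it would contain this $2$-torsion subgroup, forcing $\Lambda \supset \frac{1}{2}\Lambda_{0}$, contrary to $\Lambda \in \mathcal{L}$. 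Hence $\mathcal{L} = \bigsqcup_{n \geq 0} \mathcal{L}_{n}$, with $\Lambda \in \mathcal{L}_{n}$ exactly when $[\Lambda : \Lambda_{0}] = 2^{n}$; and if $\Lambda \subsetneq \Lambda'$ are joined by an edge then $[\Lambda' : \Lambda_{0}] = 2[\Lambda : \Lambda_{0}]$, so every edge joins consecutive levels $\mathcal{L}_{m}$ and $\mathcal{L}_{m+1}$.

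For part (a), fix $\Lambda \in \mathcal{L}_{n}$ with $n \geq 1$. An index-$2$ sublattice of $\Lambda$ automatically fails to contain $\frac{1}{2}\Lambda_{0}$ (being inside $\Lambda$), so it lies in $\mathcal{L}$ if and only if it contains $\Lambda_{0}$, and such sublattices correspond bijectively to index-$2$ subgroups of $\Lambda/\Lambda_{0} \cong \zz/2^{n}\zz$ --- of which there is exactly one, say $\tilde{\Lambda}$. Thus $\tilde{\Lambda}$ is the unique lattice in $\mathcal{L}$ with $\Lambda \supset \tilde{\Lambda}$ and $\Lambda/\tilde{\Lambda} \cong \zz/2\zz$; it lies in $\mathcal{L}_{n-1}$ since $[\tilde{\Lambda} : \Lambda_{0}] = 2^{n-1}$, and it is joined to $\Lambda$ by an edge. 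Any vertex of $\mathcal{L}_{n-1}$ adjacent to $\Lambda$ must be a sublattice of $\Lambda$ of index $2$ (a proper overlattice would lie in $\mathcal{L}_{n+1}$), hence equals $\tilde{\Lambda}$; this proves (a). In particular, for any $\Lambda' \in \mathcal{L}$, a lattice $\tilde{\Lambda}$ is the parent of $\Lambda'$ if and only if $\tilde{\Lambda} \subset \Lambda'$ with index $2$.

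For part (b), fix $\Lambda \in \mathcal{L}_{n}$ with $n \geq 2$ and let $\tilde{\Lambda} \in \mathcal{L}_{n-1}$ be its parent. By the criterion just recorded, the twins of $\Lambda$ together with $\Lambda$ itself are exactly the index-$2$ overlattices of $\tilde{\Lambda}$ lying in $\mathcal{L}$, i.e.\ the index-$2$ overlattices $\Lambda'$ with $\Lambda' \not\supset \frac{1}{2}\Lambda_{0}$ (the containment $\Lambda' \supset \Lambda_{0}$ being automatic). There are exactly three index-$2$ overlattices of $\tilde{\Lambda}$, so it suffices to show that exactly one of them contains $\frac{1}{2}\Lambda_{0}$. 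I would prove this by choosing a primitive $w \in \Lambda_{0}$ generating the cyclic group $\tilde{\Lambda}/\Lambda_{0}$ --- so $\tilde{\Lambda} = \Lambda_{0} + \zz_{2}\cdot 2^{-(n-1)}w$ --- and completing $w$ to a $\zz_{2}$-basis $\{w, w'\}$ of $\Lambda_{0}$; then $\{2^{-(n-1)}w,\, w'\}$ is a $\zz_{2}$-basis of $\tilde{\Lambda}$, and its three index-$2$ overlattices are obtained by adjoining half of $2^{-(n-1)}w$, of $w'$, or of their sum. A short direct check --- using $n - 1 \geq 1$, so that $\frac{1}{2}w = 2^{n-2}\cdot 2^{-(n-1)}w$ already lies in $\tilde{\Lambda}$ --- shows that the overlattice obtained by adjoining $\frac{1}{2}w'$ contains $\frac{1}{2}\Lambda_{0}$, while the other two do not. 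Hence exactly two of the three overlattices lie in $\mathcal{L}$; one of them is $\Lambda$, so $\Lambda$ has a unique twin.

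The only step that needs a genuine (if brief) computation is the last one: verifying that among the three index-$2$ overlattices of the parent exactly one --- not zero, not two or three --- contains $\frac{1}{2}\Lambda_{0}$. This is also exactly where the hypothesis $n \geq 2$ enters: when $n = 1$ the parent is the root $\Lambda_{0}$, all three of its overlattices are the vertices of $\mathcal{L}_{1}$, none is excluded, and (b) genuinely fails at level $1$. Alternatively one can avoid this computation altogether by invoking the identification of $\mathcal{L}$ with the Bruhat--Tits tree of $\mathrm{PGL}_{2}(\qq_{2})$ noted above: $\mathcal{L}$ is then a $3$-regular tree with $\mathcal{L}_{n}$ the sphere of radius $n$ about $\Lambda_{0}$, so every non-root vertex has a unique neighbor closer to the root (part (a)), and a vertex at distance $\geq 2$ has a parent which is again a non-root vertex and hence has exactly two children, one of which is the original vertex (part (b)).
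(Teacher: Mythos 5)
Your argument is correct and complete. The paper itself gives no proof: it asserts that $\mathcal{L}$ is isomorphic to the Bruhat--Tits tree of \cite{serre2003trees}, hence a $3$-regular tree with $\mathcal{L}_{n}$ the sphere of radius $n$ about the root $\Lambda_{0}$, and declares that both properties ``immediately follow'' --- essentially the alternative route you sketch in your closing paragraph. What you add is a self-contained lattice-theoretic verification of exactly the facts the paper outsources: that $\Lambda/\Lambda_{0}$ is forced to be cyclic by the condition $\Lambda \not\supset \frac{1}{2}\Lambda_{0}$ (so the levels $\mathcal{L}_{n}$ exhaust $\mathcal{L}$ and edges join consecutive levels), that the unique index-$2$ subgroup of $\zz/2^{n}\zz$ yields the unique parent, and --- the one step requiring actual computation --- that among the three index-$2$ overlattices of the parent exactly one contains $\frac{1}{2}\Lambda_{0}$, which is precisely the $3$-regularity statement at a non-root vertex. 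Your explicit basis $\{2^{-(n-1)}w,\, w'\}$ of $\tilde{\Lambda}$ (valid because $\tilde{\Lambda}/\Lambda_{0}$ is cyclic) makes that check routine, and you correctly isolate where $n \geq 2$ is used and why (b) fails at level $1$. The trade-off is the usual one: the paper's citation is shorter but leans on the reader knowing the tree of $\mathrm{PGL}_{2}(\qq_{2})$ and the identification of $\mathcal{L}$ with (a subtree of) it, whereas your version proves everything from the definition of $\mathcal{L}$ and would survive in a text where that identification is not available.
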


There are exactly $3$ vertices in $\mathcal{L}_{1}$, corresponding to the $3$ subgroups of order $2$ of $E[2]$, which are generated by the $2$-torsion points $(\alpha_{1}, 0)$, $(\alpha_{2}, 0)$, and $(\alpha_{3}, 0)$.  For $i = 1, 2, 3$, write $\Lambda(\alpha_{i})$ for the vertex in $\mathcal{L}_{1}$ corresponding to the subgroup $\langle (\alpha_{i}, 0) \rangle < E[2]$.  For any $\Lambda \in \mathcal{L}_{\geq 1}$, write $\tilde{\Lambda}$ for its parent vertex, and for any $\Lambda \in \mathcal{L}_{\geq 2}$, write $\Lambda'$ for its twin vertex, as in Proposition \ref{prop properties of L}.  For $\Lambda = \Lambda(\alpha_{i}) \in \mathcal{L}_{1}$, let $\Lambda' = \Lambda(\alpha_{i + 1})$, where the index $i$ is considered as an element of $\zz / 3\zz$.

In order to state the main theorem, we need to define an infinite algebraic extension of $K$ which is obtained by adjoining generators corresponding to each vertex of $\mathcal{L}$.  These generators have to be defined recursively, so that for any $n \geq 2$, the generator corresponding to a vertex in $\mathcal{L}_{n}$ is given by an algebraic function of the generators corresponding to certain vertices in $\mathcal{L}_{n - 1}$.  This motivates the following definition.

\begin{dfn}\label{dfn decoration}

A \textit{decoration} on the tree $\mathcal{L}$ is a function $\Phi : \mathcal{L}_{\geq 1} \to \bar{K}$ with the following three properties:

\indent I) For any $\Lambda \in \mathcal{L}_{\geq 1}$, $\Phi(\Lambda) \neq \Phi(\Lambda')$.

\indent II) For $i \in \zz / 3\zz$, $\Phi(\Lambda(\alpha_{i})) = \alpha_{i + 1} - \alpha_{i + 2}$.

\indent III) For every $\Lambda \in \mathcal{L}_{2}$, $\Psi(\Lambda)$ is a root of the quadratic polynomial 
\begin{equation}\label{decoration2} x^{2} - 2(2\Psi((\tilde{\Lambda})') + \Psi(\tilde{\Lambda}))x + \Psi((\tilde{\Lambda})')^{2} \in \bar{K}[x], \end{equation}
and for every $\Lambda \in \mathcal{L}_{n}$ with $n \geq 3$, $\Psi(\Lambda)$ is a root of the quadratic polynomial 
\begin{equation}\label{decoration>2} x^{2} - 2(\Psi((\tilde{\Lambda})') - 2\Psi(\tilde{\Lambda}))x + \Psi((\tilde{\Lambda})')^{2} \in \bar{K}[x]. \end{equation}

\end{dfn}

\begin{prop}\label{prop: decorations exist}

a) A decoration on $\mathcal{L}$ exists.

b) Let $\Psi$ be a decoration on $\mathcal{L}$.  For $n \geq 0$, let $K_{n}' = K(\{\Psi(\Lambda)\}_{\Lambda \in \mathcal{L}_{\leq n}})$ and let 
$$K_{\infty}' = \bigcup_{n \geq 0} K_{n}' = K(\{\Psi(\Lambda)\}_{\Lambda \in \mathcal{L}_{\geq 1}}).$$
Then the algebraic extensions $K_{n}' / K$ and $K_{\infty}' / K$ do not depend on the choice of $\Psi$.

\end{prop}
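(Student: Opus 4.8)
\emph{Proof strategy.} For part (a) I would build a decoration $\Psi$ recursively, one level of the tree $\mathcal{L}$ at a time, carrying along the auxiliary claim that $\Psi$ is nowhere zero. On $\mathcal{L}_{1}$ we must set $\Psi(\Lambda(\alpha_{i})) = \alpha_{i+1} - \alpha_{i+2}$ by property II of Definition \ref{dfn decoration}; since $\alpha_{1}, \alpha_{2}, \alpha_{3}$ are algebraically independent this is nonzero and differs from $\Psi(\Lambda(\alpha_{i+1})) = \Psi(\Lambda(\alpha_{i})')$, so property I and the auxiliary claim hold at level $1$. Assume $\Psi$ has been defined on $\mathcal{L}_{1} \cup \cdots \cup \mathcal{L}_{n-1}$ so that properties I--III hold as far as they apply and $\Psi$ is nowhere zero there. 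For each $\Lambda \in \mathcal{L}_{n-1}$, property III requires the two children of $\Lambda$ in $\mathcal{L}_{n}$ to take as their values the two roots in $\bar{K}$ of a definite monic quadratic $q_{\Lambda} \in \bar{K}[x]$ --- the polynomial of (\ref{decoration2}) if $n = 2$ and of (\ref{decoration>2}) if $n \geq 3$, read with $\Lambda$ in place of $\tilde{\Lambda}$ --- whose coefficients involve only the already-defined values $\Psi(\Lambda)$ and $\Psi(\Lambda')$. Assigning those two roots to the two children forces property III on $\mathcal{L}_{n}$; and since every twin pair in $\mathcal{L}_{n}$ is the pair of children of some $\Lambda \in \mathcal{L}_{n-1}$ (Proposition \ref{prop properties of L}), it produces property I on $\mathcal{L}_{n}$ as soon as $q_{\Lambda}$ is known to have distinct roots.

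So the heart of (a) is two non-vanishing facts about $q_{\Lambda}$. First, its constant term is $\Psi(\Lambda')^{2}$, nonzero by hypothesis, so the product of the two children's values is nonzero and neither child receives $0$; this propagates the auxiliary claim. Second, a short computation gives $\mathrm{disc}(q_{\Lambda}) = 4\bigl(\Psi(\Lambda) + \Psi(\Lambda')\bigr)\bigl(\Psi(\Lambda) + 3\Psi(\Lambda')\bigr)$ when $n = 2$ and $\mathrm{disc}(q_{\Lambda}) = 16\,\Psi(\Lambda)\bigl(\Psi(\Lambda) - \Psi(\Lambda')\bigr)$ when $n \geq 3$. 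For $n = 2$, with $\Lambda = \Lambda(\alpha_{i})$, the two factors are $\alpha_{i+1} - \alpha_{i}$ and $\alpha_{i+1} + 2\alpha_{i+2} - 3\alpha_{i}$, nonzero by algebraic independence; for $n \geq 3$ the factors are nonzero since $\Psi(\Lambda) \neq 0$ and since $\Psi(\Lambda) \neq \Psi(\Lambda')$ is exactly property I at level $n - 1$. Hence $\mathrm{disc}(q_{\Lambda}) \neq 0$ throughout, the induction closes, and the function $\Psi$ so obtained is a decoration. I expect the two discriminant computations --- together with noticing that they factor through quantities already under control --- to be the only real work in (a).

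For part (b) I would show that any two decorations differ by an automorphism of the tree $\mathcal{L}$. Let $\Psi$ and $\Psi^{\dagger}$ be decorations. I will construct, by induction on the level, a graph automorphism $\sigma$ of $\mathcal{L}$ fixing the root $\Lambda_{0}$ and fixing $\mathcal{L}_{1}$ pointwise with $\Psi^{\dagger} = \Psi \circ \sigma$; on $\mathcal{L}_{1}$ set $\sigma = \mathrm{id}$, which is forced because property II makes $\Psi$ and $\Psi^{\dagger}$ agree there. Because $\sigma$ fixes the root it preserves the distance-to-root function, hence preserves each $\mathcal{L}_{m}$ and the parent map, hence sends twin pairs to twin pairs with $\sigma(\Lambda') = (\sigma\Lambda)'$ (trivially so on $\mathcal{L}_{1}$, where $\sigma = \mathrm{id}$). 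Suppose $\sigma$ is defined on $\mathcal{L}_{1} \cup \cdots \cup \mathcal{L}_{n}$ with these properties and fix $\Lambda \in \mathcal{L}_{n}$. The quadratic $q_{\Lambda}^{\dagger}$ whose roots are the $\Psi^{\dagger}$-values of the children of $\Lambda$ has coefficients depending only on $(\Psi^{\dagger}(\Lambda), \Psi^{\dagger}(\Lambda')) = (\Psi(\sigma\Lambda), \Psi((\sigma\Lambda)'))$, so $q_{\Lambda}^{\dagger}$ equals the quadratic $q_{\sigma\Lambda}$ whose roots are the $\Psi$-values of the children of $\sigma\Lambda$. Both decorations satisfy property I, so the two children of $\Lambda$ and the two children of $\sigma\Lambda$ are each labelled bijectively by the two distinct roots of this common quadratic; define $\sigma$ on the children of $\Lambda$ to be the resulting label-matching bijection. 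Running over all $\Lambda \in \mathcal{L}_{n}$ defines $\sigma$ on $\mathcal{L}_{n+1}$; it is a well-defined bijection compatible with the tree structure, and the relation $\Psi^{\dagger} = \Psi \circ \sigma$ persists.

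Finally, such a $\sigma$ permutes each $\mathcal{L}_{m}$, so for every $n$ one has $\{\Psi^{\dagger}(\Lambda) : \Lambda \in \mathcal{L}_{\leq n}\} = \{\Psi(\sigma\Lambda) : \Lambda \in \mathcal{L}_{\leq n}\} = \{\Psi(\Lambda) : \Lambda \in \mathcal{L}_{\leq n}\}$; adjoining this common set to $K$ yields the same subfield of $\bar{K}$ for $\Psi$ and for $\Psi^{\dagger}$, which is the assertion about $K_{n}'$, and taking the union over $n$ gives it for $K_{\infty}'$. The main obstacle in (b) is making the recursive definition of $\sigma$ cohere: one must check carefully that flipping a decoration's choice at a twin pair interchanges the two quadratics controlling that pair's children rather than generating new ones --- this is precisely the identity $q_{\Lambda}^{\dagger} = q_{\sigma\Lambda}$ used above --- after which the rest is bookkeeping.
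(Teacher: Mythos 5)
Your proof is correct and follows essentially the same route as the paper's: part (a) is the same level-by-level induction resting on the nonzero constant term $\Psi(\Lambda')^{2}$ and the discriminant factorization $16\,\Psi(\Lambda)(\Psi(\Lambda)-\Psi(\Lambda'))$ (the paper packages existence via an inverse system of finite sets $F_{N}$, but the substance is your recursive extension step), and part (b) is the same permutation-matching argument, which the paper merely asserts ``by induction'' and you spell out via the identity $q_{\Lambda}^{\dagger}=q_{\sigma\Lambda}$. No gaps.
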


\begin{proof}

For each $N \geq 1$, define $F_{N}$ to be the set of all functions $\Psi : \mathcal{L}_{\geq 1} \cap \mathcal{L}_{\leq N} \to \bar{K}$ that satisfy I, II, and III for $n \leq N$.  Clearly, each $F_{N}$ is finite, and for each $N < N'$, there is a map from $F_{N'}$ to $F_{N}$ by restriction, so it will suffice to show that each $F_{N}$ is nonempty.  By definition, $F_{1}$ is nonempty, and one can explicitly show that $F_{2}$ is nonempty and that any function $\Psi \in F_{2}$ takes nonzero values in $\bar{K}$.  Now we prove inductively that $F_{N}$ is nonempty for $N \geq 3$ by showing that for each $N \geq 2$ and function $\Psi_{N} \in F_{N}$, there is a function $\Psi_{N + 1} \in F_{N + 1}$ taking nonzero values which restricts to $\Psi_{N}$.  This amounts to showing that for each $\Lambda \in \mathcal{L}_{n}$ with $n \geq 2$, the polynomial $x^{2} - 2(\Psi(\Lambda') - 2\Psi(\Lambda))x + \Psi(\Lambda')^{2}$ has two distinct, nonzero roots in $\bar{K}$.  It is clear from property II and a little computation that this is true for $\Lambda \in \mathcal{L}_{2}$.  Now assume inductively that this claim holds for all $\Lambda \in \mathcal{L}_{n - 1}$ for some $n \geq 3$.  Let $\Lambda \in \mathcal{L}_{n}$.  If $0$ were a root of $x^{2} - 2(\Psi(\Lambda') - 2\Psi(\Lambda))x + \Psi(\Lambda')^{2}$, then the constant coefficient $(\Psi(\Lambda'))^{2}$ would be $0$.  But $\Psi(\Lambda')$ is a root of the polynomial $x^{2} - 2(\Psi(\tilde{\Lambda}') - 2\Psi(\tilde{\Lambda}))x + \Psi(\tilde{\Lambda}')^{2}$, which by the inductive assumption, has nonzero roots.  Thus, the polynomial $x^{2} - 2(\Psi(\Lambda') - 2\Psi(\Lambda))x + \Psi(\Lambda')^{2}$ has nonzero roots.  Now suppose that its roots are equal.  Then its discriminant $4(\Psi(\Lambda') - 2\Psi(\Lambda))^{2} - 4\Psi(\Lambda')^{2} = 16\Psi(\Lambda)(\Psi(\Lambda) - \Psi(\Lambda'))$ is $0$, implying that either $\Psi(\Lambda) = 0$ or $\Psi(\Lambda) = \Psi(\Lambda')$.  But $\Psi(\Lambda)$ and $\Psi(\Lambda')$ are the two roots of the polynomial $x^{2} - 2(\Psi(\tilde{\Lambda}') - 2\Psi(\tilde{\Lambda}))x + \Psi(\tilde{\Lambda}')^{2}$, and by the inductive assumption, they are distinct and nonzero, so we have a contradiction, thus proving part (a).

Let $\Psi$ and $\Psi'$ be two functions on $\mathcal{L}_{\geq 1}$ taking values in $\bar{K}$, and choose any $n \geq 0$.  Then it is easy to show by induction on $n$ that if $\Psi$ and $\Psi'$ both satisfy I, II, and III, then there is a permutation $\Sigma$ on $\mathcal{L}_{n}$ such that $\Psi'(\Lambda) = \Psi(\Lambda^{\Sigma})$ for each $\Lambda \in \mathcal{L}_{n}$.   Therefore, $K(\{\Psi(\Lambda)\}_{\Lambda \in \mathcal{L}_{n}}) = K(\{\Psi'(\Lambda)\}_{\Lambda \in \mathcal{L}_{n}})$ for each $n$, which immediately implies part (b).

\end{proof}

We write $\rho_{2} : \Gal(\bar{K} / K) \to \GL(T_{2}(E)) = \Aut_{\zz_{2}}(T_{2}(E))$ for the continuous homomorphism induced by the natural Galois action on $T_{2}(E)$, and denote its image by $G$.  Similarly, for any integer $n \geq 0$, we write $\bar{\rho}_{2}^{(n)} : \Gal(K_{n} / K) \to \GL(E[2^{n}])$ for the homomorphism induced by the natural Galois action on $E[2^{n}]$, and denote its image by $\bar{G}^{(n)}$.  Let $G(n)$ denote the kernel of the natural surjection $G \twoheadrightarrow \bar{G}^{(n)}$; it is the image under $\rho_{2}$ of the normal subgroup $\Gal(\bar{K} / K_{n}) \lhd \Gal(\bar{K} / K)$.  Note that $G(0) = G$.

It follows from Corollary 1.2(b) and Remark 4.2(a) of \cite{yelton2014images} that $G$ contains the subgroup $\SL(T_{2}(E)) \subset GL(T_{2}(E))$ of automorphisms of determinant $1$.  (This also follows from applying Hilbert's Irreducibility Theorem to results such as Corollary 1 of Chapter 6, \S3 of \cite{lang1987elliptic}.)  We write $-1 \in \SL(T_{2}(E)) \subseteq G(1)$ for the scalar automorphism which acts on $T_{2}(E)$ as multiplication by $-1$.

We fix a compatible system $\{\zeta_{2^{n}}\}_{n \geq 0}$ of $2^{n}$-th roots of unity.  For any extension field $L$ of $K$, let $L(\mu_{2}) = \bigcup_{n = 1}^{\infty} L(\zeta_{2^{n}})$.

We are now ready to state the main theorem.

\begin{thm}\label{thm: main}

Let an elliptic curve $E / K$ be defined as above, with Weierstrass roots $\alpha_{1}$, $\alpha_{2}$, and $\alpha_{3}$, and define $K_{\infty}'$ as in Proposition \ref{prop: decorations exist}(b).

a) Choose $i, j \in \{1, 2, 3\}$ with $i \neq j$, and choose an element $\sqrt{\alpha_{i} - \alpha_{j}} \in \bar{K}$ whose square is $\alpha_{i} - \alpha_{j}$.  Then we have
$$K_{\infty} = K_{\infty}'(\sqrt{\alpha_{i} - \alpha_{j}})(\mu_{2}). $$

b) The element of $\Gal(\bar{K} / K_{1})$ whose image under $\rho_{2}$ is $-1 \in G(1)$ acts on $K_{\infty}$ by fixing $K_{\infty}'(\mu_{2})$ and taking $\sqrt{\alpha_{i} - \alpha_{j}}$ to $-\sqrt{\alpha_{i} - \alpha_{j}}$ for $1 \leq i, j \leq 3, i \neq j$.

\end{thm}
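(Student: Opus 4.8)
The key technical tool is to describe the $2$-power torsion of $E$ very explicitly using the classical $2$-descent / division-polynomial machinery, and then match up the recursions (III) with the formulas for doubling. I would start by recording, for a $2^n$-torsion point $P$ lying "above" a maximal cyclic subgroup corresponding to a lattice $\Lambda\in\mathcal L_n$, an expression for the $x$-coordinate of $P$ (or of $2P$, $4P$, etc.) in terms of square roots. Concretely: a point $Q$ of order $2$ has $x$-coordinate one of the $\alpha_i$; a point $P$ with $2P=(\alpha_i,0)$ satisfies $x(P)-\alpha_i=\sqrt{(\alpha_i-\alpha_j)(\alpha_i-\alpha_k)}$-type relations coming from the classical description of the field $K_2$, and the differences $x(P)-\alpha_j$ factor as squares up to the fixed choice of sign. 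Iterating the halving formula — if $2P'=P$ then $x(P')-x(P)$ and $x(P')$ itself are expressed by taking a square root whose argument is a quadratic in the "previous" data — one sees that the natural coordinates to track are precisely quantities of the form $\Psi(\Lambda)$, i.e. differences/combinations of $x$-coordinates of halves along a branch of the tree, normalized so that the defining quadratics are exactly (\ref{decoration2}) and (\ref{decoration>2}). So the first block of work is: (i) prove a lemma identifying $\Psi(\Lambda)$ (for a suitable decoration $\Psi$) with an explicit algebraic function of $x$-coordinates of torsion points attached to the branch from the root to $\Lambda$, and (ii) check that these functions satisfy I, II, III, so that by Proposition \ref{prop: decorations exist}(b) the field $K_n'$ they generate is well-defined and sits inside $K_n$.

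**From the explicit description to the field equality.** Once $K_\infty'\subseteq K_\infty$ is established via the lemma, I would prove the reverse containment $K_\infty\subseteq K_\infty'(\sqrt{\alpha_i-\alpha_j})(\mu_2)$ by showing that every $x$-coordinate and every $y$-coordinate of a $2$-power torsion point already lies in $K_\infty'(\sqrt{\alpha_i-\alpha_j})(\mu_2)$. The $x$-coordinates are visibly polynomial in the $\Psi(\Lambda)$'s and the $\alpha_i$'s; the $\alpha_i$'s themselves lie in $K_\infty'$ since $\Psi(\Lambda(\alpha_i))=\alpha_{i+1}-\alpha_{i+2}$ together with the symmetric functions in $K$ recovers each $\alpha_i$; and the $y$-coordinates are obtained from the $x$-coordinates by one further square root, namely $\sqrt{(x-\alpha_1)(x-\alpha_2)(x-\alpha_3)}$, which I must show differs from an element of $K_\infty'$ by at most $\sqrt{\alpha_i-\alpha_j}$ times a root of unity — this is where the single extra generator $\sqrt{\alpha_i-\alpha_j}$ and the $\mu_2$ come in. The Weil pairing forces $\mu_{2^\infty}\subseteq K_\infty$, accounting for the $(\mu_2)$; and a determinant/parity computation (using that $G\supseteq\SL(T_2(E))$, so $\mathrm{Gal}(\bar K/K_\infty')$ can only be a small subgroup) pins down that exactly one quadratic generator beyond $K_\infty'(\mu_2)$ is needed. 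I would make this precise by computing $[K_\infty:K_\infty'(\mu_2)]$: it is at least $2$ because $\sqrt{\alpha_i-\alpha_j}\notin K_\infty'(\mu_2)$ (a transcendence/valuation argument, or a direct Galois argument using $-1\in G$), and at most $2$ by the containment just sketched.

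**Part (b).** For the second statement I would argue as follows. The element $\sigma\in\mathrm{Gal}(\bar K/K_1)$ with $\rho_2(\sigma)=-1$ acts on $T_2(E)$ as $-\mathrm{id}$, hence fixes every lattice $\Lambda\in\mathcal L$ setwise (it sends $\Lambda$ to itself), so it fixes each subgroup $\Lambda/\Lambda_0<E[2^\infty]$; therefore it sends any torsion point $P$ to $-P$, which has the same $x$-coordinate. Consequently $\sigma$ fixes every $x$-coordinate of every $2$-power torsion point, hence fixes all the $\Psi(\Lambda)$ (which are algebraic functions of such $x$-coordinates by the lemma), i.e. $\sigma$ fixes $K_\infty'$; and $\sigma$ fixes $\mu_{2^\infty}$ since $-1$ has trivial determinant, so $\sigma$ fixes $K_\infty'(\mu_2)$. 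On the other hand $\sigma$ sends $P\mapsto -P$, so on $y$-coordinates it acts by $y\mapsto -y$; since $\sqrt{\alpha_i-\alpha_j}$ is (up to a factor in $K_\infty'(\mu_2)$) a ratio of such $y$-coordinates — more precisely it appears in the $2$-division-field description with $y$-coordinate weight $1$ — we get $\sigma(\sqrt{\alpha_i-\alpha_j})=-\sqrt{\alpha_i-\alpha_j}$. Combined with part (a), which says $K_\infty=K_\infty'(\sqrt{\alpha_i-\alpha_j})(\mu_2)$ is (at most) a quadratic extension of $K_\infty'(\mu_2)$ and that $\sqrt{\alpha_i-\alpha_j}\notin K_\infty'(\mu_2)$, this determines the action of $\sigma$ on all of $K_\infty$, and in particular shows $\sigma$ is nontrivial and is the unique nontrivial element of $\mathrm{Gal}(K_\infty/K_\infty'(\mu_2))$.

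**Main obstacle.** The crux — and the part requiring genuine computation rather than formal Galois theory — is establishing the explicit lemma in the first paragraph: producing the right normalization of the halving formulas so that the square roots one extracts at each stage satisfy exactly the quadratics (\ref{decoration2}) and (\ref{decoration>2}), including getting the "twin" relationship $\Psi(\Lambda)\ne\Psi(\Lambda')$ to correspond to the two choices of square root, and correctly handling the different shape of the $n=2$ recursion versus the $n\ge 3$ recursion (which presumably reflects that the first halving step, from $E[2]$ to $E[4]$, is special because the base points are the $2$-torsion points with $y=0$). Verifying that the resulting $x$- and $y$-coordinates close up into $K_\infty'(\sqrt{\alpha_i-\alpha_j})(\mu_2)$ and no larger is then a bookkeeping consequence, but getting the recursion to land precisely on the author's polynomials is where the real work lies.
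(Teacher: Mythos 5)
Your overall architecture is sound and broadly parallels the paper's logic (establish $K_\infty'\subseteq K_\infty$, identify $\mathrm{Gal}(K_\infty/K_\infty')$ with $\{\pm1\}$, exhibit one quadratic generator), but your proposed engine is different --- halving/division-polynomial formulas for torsion-point coordinates --- and the step that carries all the weight is left undone. You say yourself that the crux is producing a normalization of the halving data whose square-root recursion lands exactly on the quadratics (\ref{decoration2}) and (\ref{decoration>2}); without that lemma nothing downstream (the containment $K_\infty'\subseteq K_\infty$, the Galois action on the $\Psi(\Lambda)$'s, part (b)) is available. For comparison, the paper gets this identification not from halving formulas but from a tower of $2$-isogenies: to each $\Lambda\in\mathcal L_n$ it attaches a curve $E_\Lambda$ and a $2^n$-isogeny $\phi_\Lambda:E\to E_\Lambda$ with kernel $N_\Lambda$, built by composing the explicit degree-$2$ isogeny $\phi_{\beta,\gamma}$ of (\ref{2-isogeny}); the value $a_\Lambda$ is the $x$-coordinate of $\phi_{\tilde\Lambda}(P)$ for $P$ a generator of $N_\Lambda$, i.e.\ a nonzero root of the Weierstrass cubic of $E_{\tilde\Lambda}$, and the recursions (\ref{decoration2 cubic}), (\ref{decoration>2 cubic}) are read off directly from the coefficients of $E'_{\beta,\gamma}$ in (\ref{target of 2-isogeny}). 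The Galois equivariance $a_\Lambda^\sigma=a_{\Lambda^\sigma}$ (Lemma \ref{prop: Galois action on decoration}) then follows from $\phi_{\tilde\Lambda}^\sigma=\phi_{\tilde\Lambda^\sigma}$, which is exactly the statement you would need to extract, with more effort, from a coordinate-level description.

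There is also a concrete error in your second block: the claim that the $x$-coordinates of $2$-power torsion points are ``visibly polynomial in the $\Psi(\Lambda)$'s and the $\alpha_i$'s.'' At finite level this is false --- it would give $K(x(E[2^n]))\subseteq K_n'$, whereas the paper shows the containment runs the other way, $K_n'(\zeta_{2^n})\subseteq K(x(E[2^n]))$ (Theorem \ref{prop: x-coordinates}(b)), with equality only in the limit after adjoining $\mu_2$ (part (c)); and even the limiting equality $K(x(E[2^\infty]))=K_\infty'(\mu_2)$ is not a visible polynomial identity but a Galois-theoretic consequence of two facts you do not articulate: (i) an element of $\GL(T_2(E))$ fixes every vertex of the tree $\mathcal L$ if and only if it is scalar, and (ii) $\rho_2(\Gal(\bar K/K(\mu_2)))=\SL(T_2(E))$, whose only scalars are $\pm1$. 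These two facts, combined with the equivariance lemma, are how the paper bounds $[K_\infty:K_\infty'(\mu_2)]$ by $2$ (Propositions \ref{prop: key lemma} and \ref{prop: main quadratic extension}); your ``determinant/parity computation'' gestures at this but does not supply the vertex-stabilizer argument. Finally, in part (b) your assertion that $\sqrt{\alpha_i-\alpha_j}$ has ``$y$-coordinate weight $1$'' needs care: products of two such square roots occur in $K_2'$ and are fixed by $-1$ (the paper computes $K_2'=K_1(\sqrt{a_1a_2},\sqrt{a_2a_3},\sqrt{a_3a_1})$ versus $K_2=K_1(\sqrt{a_1},\sqrt{a_2},\sqrt{a_3})$ precisely to separate these), so you must show that an individual $\sqrt{\alpha_i-\alpha_j}$ lies in $K_2\setminus K_2'$ rather than in $K_2'$; that explicit computation is the missing justification.
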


The next section is dedicated to a proof of Theorem \ref{thm: main}.  The idea behind the proof is to construct $K_{\infty}'$ as a compositum of fields of definition of certain elliptic curves that have a $2$-power isogeny to $E$.  It will then be shown that $K_{\infty}' \subset K_{\infty}$ is the subextension corresponding to the subgroup of scalar automorphisms in $G$, and generators of $K_{\infty}$ over $K_{\infty}'$ will be found.  In \S3, we will use Theorem \ref{thm: main} to obtain additional results (Theorems \ref{prop: x-coordinates} and \ref{prop: bounding K(2^n)}).

\section{Proof of the main theorem}

We assume the notation of \S1.  In particular, we retain the convention that for $\alpha_{i} \in \{\alpha_{1}, \alpha_{2}, \alpha_{3}\}$, we will write $\alpha_{i + 1}$ or $\alpha_{i + 2}$ as though $i \in \zz / 3\zz$.  For each $\Lambda \in \mathcal{L}$, by the discussion in \S1, there is a corresponding cyclic subgroup of $E[2^{\infty}]$, which we will denote by $N_{\Lambda}$.  Furthermore, we will often use ``$<$" and ``$>$" to indicate inclusion of subgroups of $E[2^{\infty}]$.

We will assign to each $\Lambda \in \mathcal{L}_{n}$ an elliptic curve $E_{\Lambda}$ and a $2^{n}$-isogeny $\phi_{\Lambda} : E \rightarrow E_{\Lambda}$ whose kernel is $N_{\Lambda}$, which we will later show (Proposition \ref{prop: field of definition of isogenies}(b)) is defined over $K(N_{\Lambda})$.  We will do this using a well-known isogeny of degree $2$ (\cite{silverman2009arithmetic}, Chapter III, Example 4.5) which is defined over the field of definition of its kernel.

Set $E_{\Lambda_{0}} := E$, and let $\phi_{\Lambda_{0}} : E \rightarrow E_{\Lambda_{0}}$ be the identity isogeny.

For any $z \in \bar{K}$, denote by $E_{z}$ the elliptic curve with Weierstrass equation given by 
\begin{equation}\label{translated elliptic curve} y^{2} = (x - \alpha_{1} - z)(x - \alpha_{2} - z)(x - \alpha_{3} - z). \end{equation}
Let $t_{z}$ the isomorphism $E \rightarrow E_{z}$ sending $(x, y) \mapsto (x + z, y)$.  Define, for any $\beta, \gamma \in K_{1}$, the elliptic curves $E_{\beta, \gamma}$ and $E'_{\beta, \gamma}$ given by the following Weierstrass equations:
\begin{equation} E_{\beta, \gamma} : y^{2} = x(x - \beta)(x - \gamma), \end{equation}
\begin{equation}\label{target of 2-isogeny}  E'_{\beta, \gamma}: y^{2} = x^{3} - 2(\beta + \gamma)x^{2} + (\beta - \gamma)^{2}x. \end{equation}
Let $\phi_{\beta, \gamma} : E_{\beta, \gamma} \rightarrow E'_{\beta, \gamma}$ be the isogeny of degree $2$ given by
\begin{equation}\label{2-isogeny} \phi_{\beta, \gamma} : (x, y) \mapsto (x - (\beta + \gamma) + \frac{\beta\gamma}{x}, y(1 - \frac{\beta\gamma}{x^{2}})).\end{equation}
Note that the kernel of $\phi_{\beta, \gamma}$ has as its only nontrivial element the $2$-torsion point $(0, 0) \in E_{\beta, \gamma}$.  Now define, for $i \in \zz /3\zz$, 
\begin{equation}\label{first 2-isogeny} \phi_{\Lambda(\alpha_{i})}: E \rightarrow E'_{\alpha_{i + 1} - \alpha_{i}, \alpha_{i + 2} - \alpha_{i}},\ \phi_{\Lambda(\alpha_{i})} := \phi_{\alpha_{i + 1} - \alpha_{i}, \alpha_{i + 2} - \alpha_{i}} \circ t_{-\alpha_{i}}. \end{equation}
We assign $E_{\Lambda(\alpha_{i})} := E'_{\alpha_{i + 1} - \alpha_{i}, \alpha_{i + 2} - \alpha_{i}}$.  Note that for each $i$, $\phi_{\Lambda(\alpha_{i})}$ is an isogeny whose kernel is the order-$2$ cyclic subgroup $N_{\Lambda(\alpha_{i})} = \langle (\alpha_{i}, 0) \rangle$ of $E$.

From now on, for each $i \in \zz/3\zz$, let $a_{\Lambda(\alpha_{i})} = \alpha_{i + 1} - \alpha_{i + 2}$.  Now we may write the Weierstrass equation for $E_{\Lambda_{i}}$ as 
$$ y^{2} = x^{3} - 2((\alpha_{i + 2} - \alpha_{i}) - (\alpha_{i} - \alpha_{i + 1}))x^{2} + (\alpha_{i + 1} - \alpha_{i + 2})^{2}x $$
$$ = x^{3} - 2(2(\alpha_{i + 2} - \alpha_{i}) + (\alpha_{i + 1} - \alpha_{i + 2}))x^{2} + (\alpha_{i + 1} - \alpha_{i + 2})^{2}x$$
\begin{equation}\label{decoration2 cubic} = x^{3} - 2(2a_{\Lambda(\alpha_{i})'} + a_{\Lambda(\alpha_{i})})x^{2} + a_{\Lambda(\alpha_{i})}^{2}x. \end{equation}

Since $0$ is a root of the cubic in the above equation, we know that $(0, 0) \in E_{\Lambda(\alpha_{i})}[2]$, and it is easy to verify that in fact, $(0, 0)$ is the image of both points in $E[2] \backslash N_{\Lambda(\alpha_{i})}$.  It follows that the inverse image of $\langle (0, 0) \rangle < E_{N_{\Lambda(\alpha_{i})}}[2]$ under $\phi_{N_{\Lambda(\alpha_{i})}}$ is $E[2]$.  Then the inverse images of the other two order-$2$ subgroups of $E_{N_{\Lambda(\alpha_{j})}}(\bar{K})$ under $\phi_{N_{\Lambda(\alpha_{j})}}$ are the two cyclic order-$4$ subgroups of $E(\bar{K})$ which contain $N_{\Lambda(\alpha_{j})}$.  It follows that these cyclic order-$4$ subgroups must be $N_{\Lambda}$ and $N_{\Lambda'}$, where $\Lambda$ and $\Lambda'$ are twin vertices in $\mathcal{L}_{2}$ whose parent vertex is $\Lambda(\alpha_{i})$.  Let $a_{\Lambda}$ (resp. $a_{\Lambda'}$) be the (nonzero) root of the cubic in the above equation such that $\phi_{N_{\Lambda(\alpha_{i})}}$ takes $N_{\Lambda}$ (resp. $N_{\Lambda'}$) to the subgroup $\langle (a_{\Lambda}, 0) \rangle$ (resp. $\langle (a_{\Lambda'}, 0) \rangle$) of $E_{N_{\Lambda(\alpha_{i})}}(\bar{K})$.  Now, using the notation of above, we have the elliptic curve $E'_{-a_{\Lambda}, a_{\Lambda'} - a_{\Lambda}}$ and the isogeny $\phi_{-a_{\Lambda}, a_{\Lambda'} - a_{\Lambda}} \circ t_{-a_{\Lambda}} : E_{N_{\Lambda(\alpha_{i})}} \to E'_{-a_{\Lambda}, a_{\Lambda'} - a_{\Lambda}}$.  Its kernel is $\langle a_{\Lambda}, 0 \rangle$.  Therefore, if we assign $E_{N_{\Lambda}} := E'_{-a_{\Lambda}, a_{\Lambda'} - a_{\Lambda}}$ and 
\begin{equation}\label{2-isogeny>2} \phi_{N_{\Lambda}} := \phi_{-a_{\Lambda}, a_{\Lambda'} - a_{\Lambda}} \circ t_{-a_{\Lambda}} \circ \phi_{N_{\Lambda(\alpha_{j})}} : E \to E_{N_{\Lambda}}, \end{equation}
then $\phi_{N_{\Lambda}}$ has kernel $N_{\Lambda}$.  Its Weierstrass equation can be written as 
$$y^{2} = x^{3} - 2((-a_{\Lambda}) + (a_{\Lambda'} - a_{\Lambda}))x^{2} + ((-a_{\Lambda}) - (a_{\Lambda'} - a_{\Lambda}))^{2}x$$
\begin{equation}\label{decoration>2 cubic} = x^{3} - 2(a_{\Lambda'} - 2a_{\Lambda})x^{2} + a_{\Lambda'}^{2}x. \end{equation}
Thus, we have defined the desired $E_{N_{\Lambda}}$ and $\phi_{N_{\Lambda}}$ for all $v \in \mathcal{L}_{2}$.

We define the desired $a_{\Lambda}$, $\phi_{\Lambda}$, and $E_{\Lambda}$ for any $\Lambda \in \mathcal{L}_{\geq 3}$ in a similar manner, using induction.  The idea is as follows.  Assume that for some $n \geq 2$ we have defined $\phi_{\Lambda}$, $E_{\Lambda}$, and $a_{\Lambda}$ for all $\Lambda \in \mathcal{L}_{n}$ with the above properties, and choose a vertex $\lambda \in \mathcal{L}_{n + 1}$ whose parent is $\Lambda$.  Then since the Weierstrass equation of $E_{\Lambda}$ is given by (\ref{decoration>2 cubic}), we may define $a_{\lambda}$ (resp. $a_{\lambda'}$) to be the (nonzero) root of the cubic in (\ref{decoration>2 cubic}) such that $\phi_{\Lambda}$ takes $N_{\lambda}$ (resp. $N_{\lambda'}$) to the subgroup $\langle (a_{\lambda}, 0) \rangle$ (resp. $\langle (a_{\lambda'}, 0) \rangle$) of $E_{\Lambda}(\bar{K})$.  We make the assignments $E_{\lambda} := E'_{-a_{\lambda}, a_{\lambda'} - a_{\lambda}}$ and 
$$\phi_{\lambda} := \phi_{-a_{\lambda}, a_{\lambda'} - a_{\lambda}} \circ t_{-a_{\lambda}} \circ \phi_{\Lambda} : E \to E_{\lambda},$$
and check that $a_{\lambda}$, $\phi_{\lambda}$, and $E_{\lambda}$ have the desired properties (in particular, the kernel of $\phi_{\lambda}$ is $N_{\lambda}$).  Since the parent of every vertex $\mathcal{L}_{n + 1}$ is a vertex in $\mathcal{L}_{n}$, it follows that through the method described above, we have defined the desired $E_{\Lambda}$ and $\phi_{\Lambda}$ for all $\Lambda \in \mathcal{L}_{n + 1}$.  In this way, $a_{\Lambda}$, $E_{\Lambda}$, and $\phi_{\Lambda} \in \bar{K}$ are defined for all $\Lambda \in \mathcal{L}_{\geq 1}$.  Furthermore, for all $\Lambda \in \mathcal{L}_{\geq 1}$, we define $K_{\Lambda}$ to be the extension of $K$ obtained by adjoining the coefficients of the Weierstrass equation of $E_{\Lambda}$ given above.

\begin{lemma}\label{prop: Phi is a decoration}

Using the above notation, define $\Psi : \mathcal{L}_{\geq 1} \to \bar{K}$ by setting $\Psi(\Lambda) = a_{\Lambda}$ for $\Lambda \in {\mathcal{L}}_{\geq 1}$.  Then $\Psi$ is a decoration on $\mathcal{L}$.

\end{lemma}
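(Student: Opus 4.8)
The plan is to verify directly that the function $\Psi(\Lambda) = a_{\Lambda}$ satisfies the three conditions I, II, and III of Definition \ref{dfn decoration}; nothing new has to be built, only identified with the data $a_{\Lambda}$, $E_{\Lambda}$, $\phi_{\Lambda}$ already produced in the preceding construction. Condition II is immediate, since by construction $a_{\Lambda(\alpha_{i})} = \alpha_{i+1} - \alpha_{i+2}$ for each $i \in \zz/3\zz$, which is exactly the prescribed value of $\Psi$ on $\mathcal{L}_{1}$.

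For condition III, fix $\Lambda \in \mathcal{L}_{\geq 2}$ and let $\tilde{\Lambda}$ be its parent (Proposition \ref{prop properties of L}). By construction $a_{\Lambda}$ and $a_{\Lambda'}$ are precisely the two nonzero roots of the cubic occurring in the Weierstrass equation of $E_{\tilde{\Lambda}}$, the remaining root being $0$. If $\tilde{\Lambda} \in \mathcal{L}_{1}$ (so $\Lambda \in \mathcal{L}_{2}$), that equation is (\ref{decoration2 cubic}), and dividing the cubic by $x$ exhibits $\Psi(\Lambda) = a_{\Lambda}$ as a root of $x^{2} - 2(2 a_{\tilde{\Lambda}'} + a_{\tilde{\Lambda}})x + a_{\tilde{\Lambda}}^{2}$, which is the quadratic figuring in property III once $\Psi$ is substituted (here the asymmetric twin convention $\Lambda(\alpha_{i})' = \Lambda(\alpha_{i+1})$ on $\mathcal{L}_{1}$, used already in deriving (\ref{decoration2 cubic}), lines up $a_{\tilde{\Lambda}'}$ with $\Psi((\tilde{\Lambda})')$). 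If instead $\tilde{\Lambda} \in \mathcal{L}_{\geq 2}$ (so $\Lambda \in \mathcal{L}_{\geq 3}$), the Weierstrass equation of $E_{\tilde{\Lambda}}$ has the form (\ref{decoration>2 cubic}), and the same manoeuvre exhibits $a_{\Lambda}$ as a root of $x^{2} - 2(a_{\tilde{\Lambda}'} - 2 a_{\tilde{\Lambda}})x + a_{\tilde{\Lambda}'}^{2}$, which is (\ref{decoration>2}) after substituting $\Psi$. Along the way one should record, by an easy induction up the tree with base case $a_{\Lambda(\alpha_{i})} = \alpha_{i+1} - \alpha_{i+2} \neq 0$ (valid since the $\alpha_{j}$ are algebraically independent over $k$), that every $a_{\Lambda}$ is nonzero; this guarantees the constant term of each of these quadratics is nonzero, so that $0$ is genuinely not among the roots and the phrase ``the nonzero root'' used in the construction is unambiguous.

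Finally, for condition I we must check $\Psi(\Lambda) \neq \Psi(\Lambda')$ for every $\Lambda \in \mathcal{L}_{\geq 1}$. For $\Lambda = \Lambda(\alpha_{i}) \in \mathcal{L}_{1}$ this is the inequality $\alpha_{i+1} - \alpha_{i+2} \neq \alpha_{i+2} - \alpha_{i}$, again immediate from algebraic independence of the $\alpha_{j}$. For $\Lambda \in \mathcal{L}_{\geq 2}$ with parent $\tilde{\Lambda}$, note that $N_{\Lambda}$ and $N_{\Lambda'}$ are distinct subgroups of $E[2^{\infty}]$, each containing $N_{\tilde{\Lambda}} = \ker \phi_{\tilde{\Lambda}}$ with quotient of order $2$; hence $\phi_{\tilde{\Lambda}}(N_{\Lambda})$ and $\phi_{\tilde{\Lambda}}(N_{\Lambda'})$ are order-$2$ subgroups of $E_{\tilde{\Lambda}}$, and they are distinct, since $\phi_{\tilde{\Lambda}}(N_{\Lambda}) = \phi_{\tilde{\Lambda}}(N_{\Lambda'})$ would give $N_{\Lambda} + N_{\tilde{\Lambda}} = N_{\Lambda'} + N_{\tilde{\Lambda}}$ and hence (both containing $N_{\tilde{\Lambda}}$) $N_{\Lambda} = N_{\Lambda'}$. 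By the defining property of $a_{\Lambda}$, $a_{\Lambda'}$ these two subgroups are $\langle (a_{\Lambda}, 0) \rangle$ and $\langle (a_{\Lambda'}, 0) \rangle$, so $a_{\Lambda} \neq a_{\Lambda'}$. (Alternatively, one can argue from the discriminant identity $16\Psi(\Lambda)(\Psi(\Lambda) - \Psi(\Lambda'))$ appearing in the proof of Proposition \ref{prop: decorations exist}, run as an induction up the tree together with the nonvanishing of the $a_{\Lambda}$.) I do not anticipate any genuine obstacle: the argument is pure bookkeeping, and the only point needing care is keeping the abstract parent/twin relations on $\mathcal{L}$ — especially the convention on $\mathcal{L}_{1}$ — synchronized with the indices appearing in (\ref{decoration2 cubic}) and (\ref{decoration>2 cubic}) and with the labeling of $a_{\Lambda}$ versus $a_{\Lambda'}$.
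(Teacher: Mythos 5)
Your proof is correct and is essentially the paper's own argument in expanded form: the paper likewise disposes of II and III by pointing at the cubics (\ref{decoration2 cubic}) and (\ref{decoration>2 cubic}), and gets I from the distinctness of the roots of the associated quadratics (your subgroup-theoretic argument for I, via $N_{\Lambda} \neq N_{\Lambda'}$, is an equivalent substitute for that discriminant computation, which you also cite as an alternative). One caveat that your more explicit bookkeeping brings to the surface: dividing (\ref{decoration2 cubic}) by $x$ gives a quadratic with constant term $a_{\tilde{\Lambda}}^{2} = \Psi(\tilde{\Lambda})^{2}$, whereas (\ref{decoration2}) as printed has constant term $\Psi((\tilde{\Lambda})')^{2}$, so your assertion that the two polynomials coincide is literally false against Definition \ref{dfn decoration} as stated; this is evidently a typo in (\ref{decoration2}) (for $n \geq 3$ the recursion (\ref{decoration>2}) does match (\ref{decoration>2 cubic}) exactly, and only the linear coefficients distinguish the two cases), but since you reproduced the identification without checking the constant term, you inherited the discrepancy rather than catching it.
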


\begin{proof}

By construction, $\Psi$ satisfies properties II and III in Definition \ref{dfn decoration} (see the cubics in (\ref{decoration2 cubic}) and (\ref{decoration>2 cubic})).  Finally, as in the proof of Proposition \ref{prop: decorations exist}, the roots of the above quadratics must be distinct, fulfilling property I.

\end{proof}

\begin{dfn}\label{dfn K'}
For any integer $n \geq 0$, define the extension $K_{n}'$ of $K$ to be the compositum of the fields $K_{N_{v}}$ for all $v \in |\mathcal{L}|_{\leq n} \backslash \{v_{0}\}$.  Define the extension $K_{\infty}'$ of $\bar{K}$ to be the infinite compositum 
$$K_{\infty}' := \bigcup_{n \geq 0} K_{n}'.$$
\end{dfn}

In this way, we obtain a tower of field extensions
\begin{equation}\label{tower of fields'} K = K_{0}' \subset K_{1}' \subset K_{2}' \subset ... \subset K_{n}' \subset ..., \end{equation}
with $K_{\infty}' = \bigcup_{n \geq 0} K_{n}'$.  The following lemma shows that this notation is consistent with the notation set in the statement of Proposition \ref{prop: decorations exist}(b).

\begin{lemma}\label{prop: field of definition}

 For any $n \geq 1$, $K_{n}' = K(\{\Psi(v)\}_{\Lambda \in \mathcal{L}_{\leq n} \backslash \{\Lambda_{0}\}})$ for any decoration $\Psi$ on $\mathcal{L}$, and $K_{\infty}' = K(\{\Psi(v)\}_{\Lambda \in \mathcal{L}_{\geq 1}})$ for any decoration $\Psi$ on $\mathcal{L}$.

\end{lemma}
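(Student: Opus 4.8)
The plan is to reduce the statement to the specific decoration $\Psi(\Lambda) = a_{\Lambda}$ produced above (Lemma~\ref{prop: Phi is a decoration}) and then to read the conclusion off the explicit Weierstrass coefficients recorded in (\ref{decoration2 cubic}) and (\ref{decoration>2 cubic}).  Since by Proposition~\ref{prop: decorations exist}(b) the fields $K(\{\Psi(\Lambda)\}_{\Lambda \in \mathcal{L}_{\leq n} \backslash \{\Lambda_{0}\}})$ and $K(\{\Psi(\Lambda)\}_{\Lambda \in \mathcal{L}_{\geq 1}})$ do not depend on the choice of decoration $\Psi$, it is enough to verify the lemma for the decoration $\Psi = a_{\bullet}$; moreover the $K_{\infty}'$ assertion follows from the finite-level ones by taking the union over $n$, so it suffices to show $K_{n}' = K(\{a_{\Lambda}\}_{\Lambda \in \mathcal{L}_{\leq n} \backslash \{\Lambda_{0}\}})$ for each fixed $n \geq 1$.

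For the inclusion ``$\subseteq$'', recall from Definition~\ref{dfn K'} that $K_{n}'$ is the compositum of the fields $K_{\Lambda}$ over $\Lambda \in \mathcal{L}_{\leq n} \backslash \{\Lambda_{0}\}$, and that $K_{\Lambda}$ is generated over $K$ by the coefficients of the Weierstrass equation of $E_{\Lambda}$.  By (\ref{decoration2 cubic}) these coefficients are, for $\Lambda \in \mathcal{L}_{1}$, integer polynomials in $a_{\Lambda}$ and $a_{\Lambda'}$ (with $\Lambda' = \Lambda(\alpha_{i+1}) \in \mathcal{L}_{1}$), and by (\ref{decoration>2 cubic}) they are, for $\Lambda \in \mathcal{L}_{\geq 2}$, integer polynomials in $a_{\Lambda}$ and $a_{\Lambda'}$ with $\Lambda$ and its twin $\Lambda'$ at the same level.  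In every case $K_{\Lambda} \subseteq K(\{a_{\mu}\}_{\mu \in \mathcal{L}_{\leq n} \backslash \{\Lambda_{0}\}})$, hence so is the compositum $K_{n}'$.

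For the reverse inclusion I would show $a_{\Lambda} \in K_{n}'$ for every $\Lambda \in \mathcal{L}_{\leq n} \backslash \{\Lambda_{0}\}$, treating the two kinds of levels separately.  For $\Lambda = \Lambda(\alpha_{i}) \in \mathcal{L}_{1}$: expanding the $x^{2}$-coefficient in (\ref{decoration2 cubic}) gives $-2(\alpha_{1} + \alpha_{2} + \alpha_{3} - 3\alpha_{i})$, and since $\alpha_{1} + \alpha_{2} + \alpha_{3} \in K$ this forces $\alpha_{i} \in K_{\Lambda(\alpha_{i})} \subseteq K_{1}'$; applying this to all three $i$ gives $K_{1}' \supseteq K(\alpha_{1}, \alpha_{2}, \alpha_{3})$, which contains each $a_{\Lambda(\alpha_{i})} = \alpha_{i+1} - \alpha_{i+2}$.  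For $\Lambda \in \mathcal{L}_{m}$ with $2 \leq m \leq n$: apply (\ref{decoration>2 cubic}) to $\Lambda$ and to its twin $\Lambda'$ (which exists by Proposition~\ref{prop properties of L}(b)); the $x^{2}$-coefficients show $a_{\Lambda'} - 2a_{\Lambda} \in K_{\Lambda}$ and $a_{\Lambda} - 2a_{\Lambda'} \in K_{\Lambda'}$, so $K_{\Lambda}K_{\Lambda'}$ contains their difference $3(a_{\Lambda'} - a_{\Lambda})$ and their sum $-(a_{\Lambda} + a_{\Lambda'})$, hence (as $\mathrm{char}\, k = 0$) contains $a_{\Lambda}$ itself; since $K_{\Lambda}K_{\Lambda'} \subseteq K_{n}'$, we get $a_{\Lambda} \in K_{n}'$.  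Combining the two cases gives $K(\{a_{\Lambda}\}_{\Lambda \in \mathcal{L}_{\leq n} \backslash \{\Lambda_{0}\}}) \subseteq K_{n}'$, which together with the previous paragraph yields the desired equality.

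The one place where the argument is not purely formal is the base level $\mathcal{L}_{1}$: there the Weierstrass coefficients of $E_{\Lambda(\alpha_{i})}$ mix $a_{\Lambda}$ with the \emph{same-level} quantity $a_{\Lambda'}$ rather than with parent data, so in place of the clean twin-pair linear algebra available at levels $\geq 2$ one must use that $K$ already contains the elementary symmetric functions of the $\alpha_{j}$ in order to recover $\alpha_{i}$ — and hence $a_{\Lambda(\alpha_{i})}$ — from those coefficients.  Everything else is bookkeeping with the explicit cubics (\ref{decoration2 cubic}) and (\ref{decoration>2 cubic}).
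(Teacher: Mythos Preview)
Your argument is correct, and it differs from the paper's proof mainly in how you recover $a_{\Lambda}$ from the Weierstrass data.  The paper proceeds \emph{vertically}: it proves by induction along the path $\Lambda_{0},\Lambda_{1},\dots,\Lambda_{n}=\Lambda$ that the compositum $\tilde K_{\Lambda}$ of the $K_{\Lambda_{m}}$ equals $K(\alpha_{1},\alpha_{2},\alpha_{3},a_{\Lambda_{1}},\dots,a_{\Lambda_{n}})$, using that the coefficients of the \emph{parent} $E_{\tilde\Lambda}$ already give the symmetric functions $a_{\Lambda}+a_{\Lambda'}$ and $a_{\Lambda}a_{\Lambda'}$, so that one further (non-symmetric) coefficient of $E_{\Lambda}$ suffices to isolate $a_{\Lambda}$.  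You instead argue \emph{horizontally}: at each level $m\geq 2$ you pair $K_{\Lambda}$ with the twin field $K_{\Lambda'}$, and the two $x^{2}$-coefficients from (\ref{decoration>2 cubic}) give two independent linear forms $a_{\Lambda'}-2a_{\Lambda}$ and $a_{\Lambda}-2a_{\Lambda'}$, which you invert over a field of characteristic $0$.  This avoids the path induction entirely and never touches the parent equation at levels $\geq 2$; the price is that at level~$1$ you need a separate observation (the symmetric function $\alpha_{1}+\alpha_{2}+\alpha_{3}\in K$) because the cyclic ``twin'' convention there does not pair vertices two-by-two.  Both routes yield the same equality; yours is slightly more elementary, while the paper's path-wise formulation gives the finer information $\tilde K_{\Lambda}=K(\alpha_{1},\alpha_{2},\alpha_{3},\{a_{\Lambda_{m}}\})$ along each branch.
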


\begin{proof}

For each $\Lambda \in |\mathcal{L}|_{n}$ with $n \geq 1$, let $\{\Lambda_{0}, \Lambda_{1}, ... , \Lambda_{n} = \Lambda\}$ be the sequence of vertices in the path of length $n$ from $v_{0}$ to $v$.  Let $\tilde{K}_{\Lambda}$ denote the compositum of the fields $K_{\Lambda}$ for all $\Lambda \in \{\Lambda_{0}, \Lambda_{1}, ... , \Lambda_{n}\}$.  We claim that  
$$\tilde{K}_{\Lambda} = K(\alpha_{1}, \alpha_{2}, \alpha_{3}, \{a_{\Lambda_{m}}\}_{1 \leq m \leq n}).$$
By Definition \ref{dfn K'}, Propositions \ref{prop: decorations exist}(b), and Lemma \ref{prop: Phi is a decoration}, this suffices to prove the statement of the lemma.

The claim is trivial for $n = 1$.  Now assume inductively that the statement holds for some $n \geq 1$ and all $\Lambda \in \mathcal{L}_{n}$.  Choose any $\Lambda \in \mathcal{L}_{n + 1}$.  We may apply the inductive assumption to $\tilde{\Lambda} \in \mathcal{L}_{n}$.  We know that $E_{\Lambda}$ is given by a Weierstrass equation of the form (\ref{decoration2 cubic}) or (\ref{decoration>2 cubic}) and is therefore defined over $K(a_{v}, a_{v'})$.  But $a_{v}a_{v'}$ is a coefficient of $E_{\tilde{\Lambda}}$, and so the only element that we need to adjoin to $\tilde{K}_{\tilde{\Lambda}} = K(\alpha_{1}, \alpha_{2}, \alpha_{3}, \{a_{\Lambda_{m}}\}_{1 \leq m \leq n - 1})$ to obtain $\tilde{K}_{\Lambda}$ is $a_{\Lambda}$.  Moreover, $a_{\Lambda}$ does lie in this extension, since $-(a_{\Lambda} + a_{\Lambda'})$ is a coefficient in the equation for $E_{\Lambda}$ and $2(2a_{\Lambda'} + a_{\Lambda})$ (resp. $2(a_{\Lambda'} - 2a_{\Lambda})$) is a coefficient of $E_{\tilde{\Lambda}}$ if $n = 1$ (resp. $n \geq 2$).  Thus, we have proved the claim for $n + 1$.

\end{proof}

\begin{prop}\label{prop: field of definition of isogenies} With the above notation, 

a) the isogeny $\phi_{N_{v}}$ is defined over $K(N_{v})$, and $K_{N_{v}} \subseteq K(N_{v})$,

b) for all $n \geq 0$, $K_{n}' \subseteq K_{n}$, and equality holds for $n = 0, 1$.

\end{prop}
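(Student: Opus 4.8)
The plan is to prove both parts by induction on $n$, with the key recursive observation being that each $\phi_{N_\Lambda}$ is built as a composition of a translation $t_{-a_\Lambda}$, a standard $2$-isogeny $\phi_{\beta,\gamma}$, and the previously-constructed isogeny $\phi_{N_{\tilde\Lambda}}$ attached to the parent vertex. First I would record the base cases: for $n = 0$, $K_0' = K = K_0$ and $\phi_{\Lambda_0}$ is the identity, which is trivially defined over $K$; for $n = 1$, each $\phi_{\Lambda(\alpha_i)} = \phi_{\alpha_{i+1}-\alpha_i,\,\alpha_{i+2}-\alpha_i}\circ t_{-\alpha_i}$ has coefficients that are polynomials in $\alpha_1,\alpha_2,\alpha_3$, hence is defined over $K_1 = K(\alpha_1,\alpha_2,\alpha_3) = K(N_{\Lambda(\alpha_i)})$ for each $i$ (noting $N_{\Lambda(\alpha_i)} = \langle(\alpha_i,0)\rangle$ and $K(N_{\Lambda(\alpha_i)}) = K(\alpha_i)$, whose compositum over $i$ is $K_1'$ — so $K_1' = K_1$). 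For the inductive step toward part (a): suppose $\phi_{N_{\tilde\Lambda}}$ is defined over $K(N_{\tilde\Lambda})$ for $\Lambda \in \mathcal{L}_{n+1}$ with parent $\tilde\Lambda \in \mathcal{L}_n$. Since $\phi_{N_{\tilde\Lambda}}$ maps $N_\Lambda$ (a cyclic order-$2^{n+1}$ subgroup) onto the order-$2$ subgroup $\langle(a_\Lambda,0)\rangle$ of $E_{\tilde\Lambda}$, the element $a_\Lambda$ is the $x$-coordinate of $\phi_{N_{\tilde\Lambda}}$ applied to a generator of $N_\Lambda$; as $\phi_{N_{\tilde\Lambda}}$ is defined over $K(N_{\tilde\Lambda}) \subseteq K(N_\Lambda)$ and $N_\Lambda$ is defined over $K(N_\Lambda)$, we get $a_\Lambda \in K(N_\Lambda)$. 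Then $\phi_{N_\Lambda} = \phi_{-a_\Lambda,\,a_{\Lambda'}-a_\Lambda}\circ t_{-a_\Lambda}\circ\phi_{N_{\tilde\Lambda}}$; here $a_\Lambda \in K(N_\Lambda)$, and $a_{\Lambda'}$ satisfies $a_\Lambda a_{\Lambda'} = $ (a coefficient of $E_{\tilde\Lambda}$) $\in K(N_{\tilde\Lambda}) \subseteq K(N_\Lambda)$, so $a_{\Lambda'} \in K(N_\Lambda)$ as well; thus $\phi_{N_\Lambda}$, being a composite of maps each defined over $K(N_\Lambda)$, is defined over $K(N_\Lambda)$. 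Since $E_\Lambda = E'_{-a_\Lambda,\,a_{\Lambda'}-a_\Lambda}$ has coefficients in $K(a_\Lambda, a_{\Lambda'}) \subseteq K(N_\Lambda)$, we conclude $K_\Lambda \subseteq K(N_\Lambda)$, completing (a).

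For part (b), the inclusion $K_n' \subseteq K_n$ follows formally from (a): by Definition 2.3, $K_n'$ is the compositum of the $K_{N_v}$ over $v \in \mathcal{L}_{\leq n}\setminus\{\Lambda_0\}$, and each such $v$ corresponds to a cyclic subgroup $N_v < E[2^{\deg v}] \subseteq E[2^n]$, so $K(N_v) \subseteq K(E[2^n]) = K_n$; combining with $K_{N_v} \subseteq K(N_v)$ from (a) gives $K_n' \subseteq K_n$. The equality cases $n = 0, 1$ I would handle as above: $K_0' = K = K_0$ is immediate, and for $n = 1$, $K_1' = \prod_i K(\alpha_i) = K(\alpha_1,\alpha_2,\alpha_3) = K_1$, where the last equality is the standard fact (noted in the introduction) that the $2$-torsion field of $E$ in Weierstrass form $y^2 = (x-\alpha_1)(x-\alpha_2)(x-\alpha_3)$ is obtained by adjoining the roots $\alpha_i$.

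The step I expect to be the main obstacle is verifying cleanly that $a_\Lambda \in K(N_\Lambda)$ — more precisely, ensuring that the labeling convention distinguishing $a_\Lambda$ from its twin $a_{\Lambda'}$ (made by a choice in the construction) does not secretly require a larger field. The resolution is that while an individual choice of which nonzero root is called $a_\Lambda$ versus $a_{\Lambda'}$ might be a Galois-theoretic choice, the \emph{pair} $\{a_\Lambda, a_{\Lambda'}\}$ is determined by $N_\Lambda$ together with $\phi_{N_{\tilde\Lambda}}$ (the two images of the order-$2$ subgroups contained in the order-$2^{n+1}$ cyclic groups above $N_{\tilde\Lambda}$), and — crucially — $a_\Lambda$ itself is the image of a \emph{specific} point of $N_\Lambda$ under a map defined over $K(N_\Lambda)$, so it genuinely lies in $K(N_\Lambda)$ and not merely in a quadratic extension. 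I would make this precise by tracking that $\phi_{N_{\tilde\Lambda}}$ restricted to $N_\Lambda$ is a $K(N_\Lambda)$-rational map and $N_\Lambda$ is pointwise (hence in particular: a chosen generator is) $K(N_\Lambda)$-rational, since $N_\Lambda$ is cyclic of $2$-power order and defining $N_\Lambda$ over $K(N_\Lambda)$ in the sense of Galois-stability is equivalent to all its points being $K(N_\Lambda)$-rational up to the action on generators, which fixes $a_\Lambda$ as the relevant symmetric datum. Everything else is routine bookkeeping with the explicit formulas (2.5)–(2.11) for the composite isogenies and their Weierstrass models.
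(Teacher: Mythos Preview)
Your proof follows essentially the same inductive strategy as the paper's, and the inductive step and part~(b) are handled just as the paper does. There is, however, one small slip in your base case that matters for the induction. You write $K_1 = K(N_{\Lambda(\alpha_i)})$, but in fact $K(N_{\Lambda(\alpha_i)}) = K(\alpha_i)$ is (generically) a proper degree-$3$ subfield of $K_1$, as you yourself note parenthetically. For part~(a) at level $n=1$ you need $\phi_{\Lambda(\alpha_i)}$ and $E_{\Lambda(\alpha_i)}$ defined over $K(\alpha_i)$, not merely over $K_1$; otherwise the induction does not start, since in the step for $\Lambda \in \mathcal{L}_2$ you invoke that $\phi_{\tilde\Lambda}$ is defined over $K(N_{\tilde\Lambda}) = K(\alpha_i)$, and $K_1 \not\subseteq K(N_\Lambda)$ in general. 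The fix is the observation the paper uses: both $E'_{\beta,\gamma}$ and $\phi_{\beta,\gamma}$ are symmetric in $\beta$ and $\gamma$ (their formulas involve only $\beta+\gamma$, $\beta\gamma$, and $(\beta-\gamma)^2$), so with $\beta = \alpha_{i+1}-\alpha_i$ and $\gamma = \alpha_{i+2}-\alpha_i$ they descend to the subfield of $K_1$ fixed by the transposition $\alpha_{i+1} \leftrightarrow \alpha_{i+2}$, namely $K(\alpha_i)$. With that adjustment your argument is correct and matches the paper's.
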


\begin{proof}

First of all, for $i \in \zz / 3\zz$, $E_{\Lambda(\alpha_{i})}$ and $\phi_{\Lambda(\alpha_{i})}$ are defined over $K(\alpha_{i + 1} - \alpha_{i}, \alpha_{i + 2} - \alpha_{i}) = K(\alpha_{i})$.  This implies the equality in the $n = 1$ case of the statament in part (b) (the equality in the $n = 0$ case is trivial).  It also proves part (a) for $\Lambda \in \mathcal{L}_{1}$, since $K(N_{\Lambda(\alpha_{i})}) = K(\alpha_{i})$ for each $i \in \zz / 3\zz$.

Now assume inductively that for some $n \geq 1$ and all $\Lambda \in \mathcal{L}_{n}$, $\phi_{\Lambda}$ is defined over $K(N_{\Lambda})$ and $K_{N_{\Lambda}} \subseteq K(N_{\Lambda})$.  Choose any $\Lambda \in \mathcal{L}_{n + 1}$.  We may apply the inductive assumption to $\tilde{\Lambda}$, since $\tilde{\Lambda} \in \mathcal{L}_{n}$.  Let $P$ be a generator of the cyclic order-$2^{n + 1}$ subgroup $N_{\Lambda}$.  Then $P$ has coordinates in $K(N_{\Lambda})$ and $\phi_{\tilde{\Lambda}}$ is defined over $K(N_{\tilde{\Lambda}}) \subseteq K(N_{\Lambda})$, and it follows that $\phi_{\tilde{\Lambda}}(P) = (a_{\Lambda}, 0)$ has coordinates in $K(N_{\Lambda})$.  Thus, $a_{\Lambda} \in K(N_{\Lambda})$.  But $a_{\Lambda}a_{\Lambda'}$ is a coefficient of $E_{\tilde{\Lambda}}$ and $K_{\tilde{\Lambda}} \subseteq K(N_{\tilde{\Lambda}})$, so $a_{\Lambda'} \in K(N_{\Lambda})$ also.  By construction, $\phi_{-a_{\Lambda}, a_{\Lambda'} - a_{\Lambda}} \circ t_{-a_{\Lambda}}$ is defined over $K(a_{\Lambda}, a_{\Lambda'})$, so $\phi_{\Lambda} = \phi_{-a_{\Lambda}, a_{\Lambda'} - a_{\Lambda}} \circ t_{-a_{\Lambda}} \circ \phi_{\tilde{\Lambda}}$ is defined over $K(N_{\tilde{\Lambda}})(a_{\Lambda}, a_{\Lambda'}) \subseteq K(N_{\Lambda})$.  Moreover, the Weierstrass equation (\ref{decoration>2 cubic}) of $E_{\Lambda} = E'_{-a_{\Lambda}, a_{\Lambda'} - a_{\Lambda}}$ has coefficients in $K(a_{\Lambda}, a_{\Lambda'}) \subseteq K(N_{\Lambda})$, and so $K_{\Lambda} \subseteq K(N_{\Lambda})$, thus proving part (a).

Now part (a) and the fact that $K_{n}'$ is the compositum of the fields $K_{\Lambda}$ for all $\Lambda \in \mathcal{L}_{\leq n} \backslash \{\Lambda_{0}\}$ imply that $K_{n}'$ is contained in the compositum of the extensions $K(N_{\Lambda})$ for all $\Lambda \in \mathcal{L}_{\leq n} \backslash \{\Lambda_{0}\}$.  Since $\{N_{\Lambda}\}_{\Lambda \in \mathcal{L}_{\leq n}}$ is the set of all cyclic subgroups of $E[2^{n}]$ and therefore generates $E[2^{n}]$, this compositum is $K_{n}$.  Thus, $K_{n}' \subseteq K_{n}$, which is the statement of (b).

\end{proof}

Next we want to determine how the absolute Galois group of $K$ acts on the $a_{\Lambda}$'s defined above.  In order to describe this Galois action, we will adopt the following notation.  The automorphism group $\GL(T_{2}(E))$ acts on the set of rank-$2$ $\zz_{2}$-lattices in $V_{2}(E)$ by left multiplication, and this action stabilizes $\mathcal{L}$, since $\GL(T_{2}(E))$ fixes $T_{2}(E)$.  (In fact, this action of $\GL(T_{2}(E))$ on $\mathcal{L}$ is the action of $\Aut_{\zz_{2}}(T_{2}(E)) \subset \mathrm{Aut}_{\qq_{2}}(V_{2}(E))$ on the Bruhat-Tits tree as described in \cite{serre2003trees}, \S1.2.)  In particular, each $\mathcal{L}_{n}$ is invariant under the action.  Recall the definitions of $G$ and $\bar{G}^{(n)}$ from \S1.  We will denote the action of $G \subset \GL(T_{2}(E))$ on $\Lambda$ by $(s, \Lambda) \mapsto s \cdot \Lambda$ for an automorphism $s$ and vertex $\Lambda$.  Note that this action of $G$, when restricted to $\mathcal{L}_{\leq n}$, factors through $G \twoheadrightarrow \bar{G}^{(n)}$.  We similarly denote the resulting action of $\bar{G}^{(n)}$ on the subtree $\Lambda_{n}$ by $(\bar{s}, \Lambda) \mapsto \bar{s} \cdot \Lambda$ for an automorphism $\bar{s}$ and a vertex $\Lambda$.

Recall that $\rho_{2} : \mathrm{Gal}(\bar{K} / K) \to \GL(T_{2}(E))$ (resp. $\bar{\rho}_{2}^{(n)} : \mathrm{Gal}(K_{n} / K) \to \GL(E[2^{n}])$) is the homomorphism induced by the Galois action on $T_{2}(E)$ (resp. the Galois action on $E[2^{n}]$, for each $n \geq 0$).  For any Galois element $\sigma \in \mathrm{Gal}(\bar{K} / K)$ and vertex $\Lambda$ of $\mathcal{L}$, let $\Lambda^{\sigma} := \rho_{2}(\sigma) \cdot \Lambda$.  If $\Lambda \in \mathcal{L}_{\leq n}$ for some $n \geq 1$, then let $\Lambda^{\sigma | K_{n}} := \bar{\rho}_{2}^{(n)}(\sigma) \cdot \Lambda$.

\begin{lemma}\label{prop: Galois action on decoration}

For any $\sigma \in \mathrm{Gal}(\bar{K} / K_{1})$ and vertex $\Lambda$, we have $a_{v}^{\sigma} = a_{v^{\sigma}}$.  If $\Lambda \in \mathcal{L}_{\leq n}$, then $a_{\Lambda}^{\sigma |_{K_{n}}} = a_{\Lambda^{\sigma | K_{n}}}$.

\end{lemma}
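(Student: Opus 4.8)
The plan is to prove the statement by induction on the distance $n$ of the vertex $\Lambda$ from the root, running the induction in parallel with the recursive construction of the $a_\Lambda$'s and $\phi_\Lambda$'s. The key mechanism is the naturality of the isogeny construction under Galois: since all the objects $E_\Lambda$, $\phi_\Lambda$, $N_\Lambda$ are defined by algebraic formulas with coefficients in $K_1$ (or $K$), applying $\sigma \in \Gal(\bar K/K_1)$ to everything in sight permutes the torsion subgroups according to $\rho_2(\sigma)$, hence permutes the vertices by $\Lambda \mapsto \Lambda^\sigma$, and therefore must carry the data attached to $\Lambda$ to the data attached to $\Lambda^\sigma$.

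First I would handle the base case $n = 1$. For $\Lambda = \Lambda(\alpha_i)$ we have $a_{\Lambda(\alpha_i)} = \alpha_{i+1} - \alpha_{i+2}$ by construction, and $\sigma \in \Gal(\bar K/K_1)$ fixes each $\alpha_j$, so $a_{\Lambda(\alpha_i)}^\sigma = a_{\Lambda(\alpha_i)}$; on the other hand $\rho_2(\sigma)$ fixes $E[2]$ pointwise (as $\sigma$ fixes $K_1 = K(E[2])$), so $\Lambda(\alpha_i)^\sigma = \Lambda(\alpha_i)$, and the identity holds trivially. Next, for the inductive step, suppose the claim holds for all vertices in $\mathcal{L}_n$ and let $\Lambda \in \mathcal{L}_{n+1}$ with parent $\tilde\Lambda \in \mathcal{L}_n$. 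I would apply $\sigma$ to the defining relation for $a_\Lambda$: by definition $a_\Lambda$ is the nonzero root of the cubic \eqref{decoration>2 cubic} (or \eqref{decoration2 cubic} if $n=1$) attached to $E_{\tilde\Lambda}$ such that $\phi_{\tilde\Lambda}$ carries $N_\Lambda$ to $\langle(a_\Lambda,0)\rangle$. Applying $\sigma$: the cubic defining $E_{\tilde\Lambda}$ goes to the cubic defining $E_{\tilde\Lambda^\sigma}$ (by the inductive hypothesis applied to the coefficients, which are polynomials in the $a$'s of $\mathcal{L}_{n}$ and $\mathcal{L}_{n-1}$); the isogeny $\phi_{\tilde\Lambda}$, being defined over $K(N_{\tilde\Lambda})$ by Proposition \ref{prop: field of definition of isogenies}(a) via explicit formulas, goes to $\phi_{\tilde\Lambda^\sigma}$; and $N_\Lambda$ goes to $N_\Lambda^\sigma = N_{\Lambda^\sigma}$ by the definition of the $G$-action on $\mathcal{L}$. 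Hence $\phi_{\tilde\Lambda^\sigma}$ carries $N_{\Lambda^\sigma}$ to $\langle (a_\Lambda^\sigma, 0)\rangle$, and since $a_\Lambda^\sigma$ is a root of the $\sigma$-transformed cubic and is nonzero, it must equal $a_{\Lambda^\sigma}$ by the defining property of the latter. (Here one uses that $\widetilde{\Lambda^\sigma} = \tilde\Lambda^\sigma$, i.e. the $G$-action commutes with passing to the parent, which is immediate from the lattice description.)

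The step I expect to be the main obstacle — or at least the one requiring care — is verifying that $\phi_{\tilde\Lambda}^\sigma = \phi_{\tilde\Lambda^\sigma}$ rather than merely some isogeny with the correct kernel: a priori one only knows the kernels match. One resolves this by observing that $\phi_{\tilde\Lambda}$ is, by construction \eqref{2-isogeny>2}, a composition of the standard degree-$2$ isogenies $\phi_{\beta,\gamma}$, translations $t_z$, and lower isogenies, each given by the universal formulas \eqref{translated elliptic curve}–\eqref{2-isogeny}; applying $\sigma$ to these formulas and using the inductive hypothesis on the parameters $a_{\Lambda_m}$, $\beta$, $\gamma$ shows that $\phi_{\tilde\Lambda}^\sigma$ is obtained from exactly the same recipe applied to the $\sigma$-transformed parameters, which is precisely the recipe defining $\phi_{\tilde\Lambda^\sigma}$. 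The only subtlety is the choice, at each stage, of which root is called $a_\Lambda$ versus $a_{\Lambda'}$: this is pinned down by the condition "$\phi_{\tilde\Lambda}$ sends $N_\Lambda$ to $\langle(a_\Lambda,0)\rangle$," and this condition is $\sigma$-equivariant precisely because $\sigma$ sends $N_\Lambda$ to $N_{\Lambda^\sigma}$ — so the bookkeeping of roots matches up automatically. Finally, the second assertion, with $\sigma|_{K_n}$ and $\Lambda^{\sigma|K_n}$, follows formally: for $\Lambda \in \mathcal{L}_{\le n}$ the quantity $a_\Lambda$ lies in $K_n$ by Lemma \ref{prop: field of definition} and Proposition \ref{prop: field of definition of isogenies}(b), and the $G$-action on $\mathcal{L}_{\le n}$ factors through $\bar G^{(n)}$, so both sides depend only on $\sigma|_{K_n}$ and the first assertion gives the result.
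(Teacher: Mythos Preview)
Your proposal is correct and follows essentially the same inductive argument as the paper's proof: both induct on the level $n$, carry along the identities $a_\Lambda^\sigma = a_{\Lambda^\sigma}$, $E_\Lambda^\sigma = E_{\Lambda^\sigma}$, $\phi_\Lambda^\sigma = \phi_{\Lambda^\sigma}$, and deduce the identity at level $n+1$ by applying $\sigma$ to the relation $\phi_{\tilde\Lambda}(P) = (a_\Lambda,0)$ for a generator $P$ of $N_\Lambda$. If anything, you are more explicit than the paper about why $\phi_{\tilde\Lambda}^\sigma = \phi_{\tilde\Lambda^\sigma}$ (the paper simply folds this into the inductive hypothesis), and your remark that $\widetilde{\Lambda^\sigma} = (\tilde\Lambda)^\sigma$ fills in a step the paper leaves implicit.
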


\begin{proof}

Choose any $\sigma \in \mathrm{Gal}(\bar{K} / K_{1})$.  We will prove that $a_{\Lambda}^{\sigma} = a_{\Lambda^{\sigma}}$ for all $\Lambda \in \mathcal{L}_{n}$ for each $n \geq 1$.  The claim is trivially true for $n = 1$.  Moreover, in the $n = 1$ case, $E_{\Lambda}$ and $\phi_{\Lambda}$ are clearly defined over $K_{1}$ and are therefore fixed by $\sigma$.  In particular, for any $\Lambda \in \mathcal{L}_{1}$, $a_{\Lambda}^{\sigma} = a_{\Lambda^{\sigma}}$, $E_{\Lambda}^{\sigma} = E_{\Lambda^{\sigma}}$, and $\phi_{\Lambda}^{\sigma} = \phi_{\Lambda^{\sigma}}$.  Now choose $\Lambda \in \mathcal{L}_{2}$.  Then $a_{\Lambda}$ is a nonzero root of (\ref{decoration2 cubic}), which is the Weierstrass cubic for $E_{\tilde{\Lambda}}$.  Let $P$ be a generator of the cyclic order-$4$ subgroup $N_{\Lambda}$; then $\phi_{\tilde{\Lambda}}(P) = (a_{\Lambda}, 0) \in E_{\tilde{\Lambda}}[2]$, by the above construction of $a_{\Lambda}$.  So we have 
\begin{equation}\label{proving Galois action on decoration2} \phi_{N_{\tilde{v}^{\sigma}}}(P^{\sigma}) = \phi_{N_{\tilde{v}}}^{\sigma}(P^{\sigma}) = (\phi_{N_{\tilde{v}}}(P))^{\sigma} = (\alpha_{v}^{\sigma}, 0). \end{equation}
But $P^{\sigma}$ generates $N_{v}^{\sigma} = N_{v^{\sigma}}$.  By the above construction, we have $\phi_{N_{\tilde{v}^{\sigma}}}(P^{\sigma}) = (a_{v^{\sigma}}, 0)$.  Then (\ref{proving Galois action on decoration2}) implies that $a_{v}^{\sigma} = a_{v^{\sigma}}$.

Now assume inductively that for some $n \geq 2$ and $\sigma \in \mathcal{L}_{n}$, $a_{\Lambda}^{\sigma} = a_{\Lambda^{\sigma}}$, $E_{\Lambda}^{\sigma} = E_{v^{\sigma}}$, and $\phi_{\Lambda}^{\sigma} = \phi_{\Lambda^{\sigma}}$, and choose any $\Lambda \in \mathcal{L}_{n + 1}$.  Then $a_{\Lambda}$ is a nonzero root of (\ref{decoration>2 cubic}), and by a similar argument, again $a_{\Lambda}^{\sigma} = a_{\Lambda^{\sigma}}$.  This inductive argument proves the first statement, and the second statement follows immediately.
\end{proof}

\begin{prop}\label{prop: key lemma}

For all $n \geq 1$, the image of $\mathrm{Gal}(K_{n}/K_{n}')$ (resp. $\Gal(K_{\infty} / K_{\infty}')$) under $\bar{\rho}_{2}^{(n)}$ (resp. $\rho_{2}$) coincides with the subgroup of scalar automorphisms in $G$ (resp. in $\bar{G}^{(n)}$).

\end{prop}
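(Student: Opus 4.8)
The plan is to prove that a Galois element fixes $K_n'$ (resp.\ $K_\infty'$) exactly when it acts trivially on the finite subtree $\mathcal{L}_{\leq n}$ (resp.\ on all of $\mathcal{L}$), and then that an element of $\bar G^{(n)}$ (resp.\ of $G$) fixing every vertex of $\mathcal{L}_{\leq n}$ (resp.\ of $\mathcal{L}$) is precisely a scalar automorphism; combining the two yields both assertions. As a preliminary step I would reduce to elements of $\Gal(\bar K/K_1)$, the situation in which Lemma \ref{prop: Galois action on decoration} applies. Since $K_1 = K_1' \subseteq K_n'$ and $K_1 \subseteq K_\infty'$ (Proposition \ref{prop: field of definition of isogenies}(b)), any element of $\Gal(K_n/K_n')$ or of $\Gal(K_\infty/K_\infty')$ lifts to some $\sigma \in \Gal(\bar K/K_1)$; conversely a scalar automorphism $c \cdot \mathrm{id}$ in $\bar G^{(n)}$ or in $G$ has $c$ odd, hence acts trivially on $E[2]$, so any Galois element inducing it fixes $K_1 = K(E[2])$ and may again be lifted into $\Gal(\bar K/K_1)$.

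For the key equivalence, recall from Lemma \ref{prop: field of definition} that $K_n' = K(\{a_\Lambda\}_{\Lambda \in \mathcal{L}_{\leq n}\backslash\{\Lambda_0\}})$ and $K_\infty' = K(\{a_\Lambda\}_{\Lambda \in \mathcal{L}_{\geq 1}})$, where $\Lambda \mapsto a_\Lambda$ is the decoration of Lemma \ref{prop: Phi is a decoration}. Thus $\sigma \in \Gal(\bar K/K_1)$ fixes $K_n'$ (resp.\ $K_\infty'$) if and only if $\sigma(a_\Lambda) = a_\Lambda$ for every relevant vertex $\Lambda$, which by Lemma \ref{prop: Galois action on decoration} says $a_{\Lambda^\sigma} = a_\Lambda$. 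That $\rho_2(\sigma)$ fixing every vertex implies $\sigma$ fixes $K_n'$ (resp.\ $K_\infty'$) is then clear, and for the converse I would induct on the level. At level $1$, $\sigma$ fixes $E[2]$ pointwise (it fixes $K_1 = K(E[2])$), so $\Lambda^\sigma = \Lambda$ for all $\Lambda \in \mathcal{L}_1$. For $\Lambda \in \mathcal{L}_{m+1}$ with $m + 1 \geq 2$: because $\rho_2(\sigma)$ acts on $\mathcal{L}$ as a tree automorphism fixing the root $\Lambda_0$, it preserves the parent map, so the parent of $\Lambda^\sigma$ equals $(\tilde\Lambda)^\sigma = \tilde\Lambda$ by the inductive hypothesis; hence $\Lambda^\sigma$ is a child of $\tilde\Lambda$, i.e.\ $\Lambda^\sigma \in \{\Lambda, \Lambda'\}$ by Proposition \ref{prop properties of L}. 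If $\Lambda^\sigma = \Lambda'$ then $a_\Lambda = a_{\Lambda^\sigma} = a_{\Lambda'}$, contradicting property I in Definition \ref{dfn decoration}; so $\Lambda^\sigma = \Lambda$, completing the induction.

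It remains to see that an element of $\bar G^{(n)}$ (resp.\ of $G$) fixing every vertex of $\mathcal{L}_{\leq n}$ (resp.\ of $\mathcal{L}$) is a scalar. The vertices of $\mathcal{L}_n$ are exactly the cyclic subgroups of order $2^n$ of $E[2^n]$; choosing a basis $e_1, e_2$ of $E[2^n] \cong (\zz/2^n\zz)^2$, the three subgroups $\langle e_1\rangle$, $\langle e_2\rangle$, $\langle e_1 + e_2\rangle$ are of this kind, and an automorphism of $E[2^n]$ stabilizing all three must carry $e_1 \mapsto c e_1$ and $e_2 \mapsto c e_2$ for a single unit $c$, hence be $c \cdot \mathrm{id}$. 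This proves the claim for $\bar G^{(n)}$; for $G$, if $s \in G$ fixes every vertex of $\mathcal{L}$ then for each $n$ its image in $\bar G^{(n)}$ fixes every vertex of $\mathcal{L}_{\leq n}$ and so is a scalar $c_n \cdot \mathrm{id}$, and since the $c_n$ are compatible under reduction, $s$ is the scalar $(c_n)_n \in \zz_2^\times$. (Equivalently, one may invoke the standard fact that the kernel of the $\GL(T_2(E))$-action on the Bruhat--Tits tree $\mathcal{L}$ is the group of scalar automorphisms.)

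Putting these together: by the forward directions of the last two steps, $\bar\rho_2^{(n)}(\Gal(K_n/K_n'))$ consists of scalar automorphisms in $\bar G^{(n)}$; by their reverse directions, any Galois preimage of a scalar automorphism lying in $\bar G^{(n)}$ fixes $K_n'$, so the image is the full subgroup of scalars in $\bar G^{(n)}$ --- and the statement for $\Gal(K_\infty/K_\infty')$ under $\rho_2$, with $G$ in place of $\bar G^{(n)}$, follows in the same way, working throughout with $\mathcal{L}$ rather than $\mathcal{L}_{\leq n}$. I expect the main friction to be bookkeeping: keeping the groups $\Gal(\bar K/K)$, $\Gal(K_\infty/K)$ and $\Gal(K_n/K)$ and their restriction maps straight so as to remain inside $\Gal(\bar K/K_1)$ where Lemma \ref{prop: Galois action on decoration} is valid, and handling level $1$ of the induction by a different mechanism (that $\sigma$ fixes $K_1$) than the recursive step, where property I of the decoration is all that is needed --- full injectivity of $\Lambda \mapsto a_\Lambda$ on $\mathcal{L}_n$ is not.
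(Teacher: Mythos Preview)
Your proof is correct and follows essentially the same approach as the paper's: reduce to Galois elements over $K_1$, identify $K_n'$ with $K(\{a_\Lambda\})$ via Lemma~\ref{prop: field of definition}, use Lemma~\ref{prop: Galois action on decoration} to translate the fixed-field condition into fixing vertices of $\mathcal{L}_{\leq n}$, and then observe that the vertex-fixing automorphisms are exactly the scalars. Where the paper simply asserts the last two equivalences (``By Lemma~\ref{prop: Galois action on decoration}, these are the Galois elements which fix every vertex\ldots'' and ``It is easy to see\ldots''), you supply the details --- the induction on the level using property~I of the decoration, and the three-line computation with $\langle e_1\rangle,\langle e_2\rangle,\langle e_1+e_2\rangle$ --- which is a welcome clarification rather than a different argument.
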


\begin{rmk}

Since each $\sigma \in \Gal(\bar{K} / K)$ takes $\zeta_{8}$ to $\zeta_{8}^{\det(\bar{\rho}_{2}^{(3)}(\sigma))}$, all scalar automorphisms in $G$ must fix $\zeta_{8}$.  Thus, Proposition \ref{prop: key lemma} implies that $K_{\infty}$ contains the $8$th roots of unity.

\end{rmk}

\begin{proof}

Fix $n \geq 1$.  Since $K_{n}' \supseteq K_{1}$ for each $n \geq 1$, we only need to consider the Galois subgroup $\mathrm{Gal}(K_{n}' / K_{1}) \subseteq \Gal(K_{n} / K_{1})$.  Proposition \ref{prop: field of definition}, with the help of Lemma \ref{prop: Phi is a decoration}, implies that $K_{n}'$ is generated over $K_{1}$ by the elements $a_{\Lambda}$ for all $\Lambda \in \mathcal{L}_{\leq n} \backslash \{\Lambda_{0}\}$.  Therefore, the elements of $\mathrm{Gal}(K_{\infty} / K_{1})$ which fix $K_{n}'$ are exactly those which fix all of the elements $a_{\Lambda}$ for $\Lambda \in \mathcal{L}_{\leq n} \backslash \{\Lambda_{0}\}$.  By Lemma \ref{prop: Galois action on decoration}, these are the Galois elements which fix every vertex in $\mathcal{L}_{\leq n}$.  It is easy to see that an element of $\GL(E[2^{n}])$ fixes every vertex in $\mathcal{L}_{n}$ if and only if it is a scalar automorphism.  Thus, the image of $\Gal(K_{n} / K_{n}')$ coincides with the subgroup of scalars, as desired.  A similar argument proves the analogous statement for $\Gal(K_{\infty} / K_{\infty}')$.

\end{proof}

From now on, for ease of notation, we set $a_{i} := a_{\Lambda(\alpha_{i})} = \alpha_{i + 1} - \alpha_{i + 2}$ for $i \in \zz / 3\zz$.  For each $a_{i}$, choose an element $\sqrt{a_{i}} \in \bar{K}$ whose square is $a_{i}$.  Also, for $r \in \zz_{2}$, we will write $r \in \mathrm{SL}(T_{2}(E))$ (resp. $r \in \mathrm{SL}(E[2^{n}])$ for some $n$) for the scalar matrix corresponding to $r$ (resp. $r$ modulo $2^{n}$).

Note that, due to the Galois equivariance of the Weil pairing, the image of $\Gal(\bar{K} / K(\mu_{2}))$ under $\rho_{2}$ coincides with $\SL(T_{2}(E))$, and $K_{\infty} \supset K(\mu_{2})$.  The following proposition, together with Proposition \ref{prop: field of definition}, gives the statement of Theorem \ref{thm: main}.

\begin{prop}\label{prop: main quadratic extension}

The extension $K_{\infty}'(\mu_{2}) \supset K(\mu_{2})$ corresponds to the subgroup $\{\pm 1\} \lhd \mathrm{SL}(T_{2}(E)) \cong \Gal(K_{\infty} / K(\mu_{2}))$.  In fact, 
\begin{equation}\label{main quadratic extension} K_{\infty} = K_{\infty}'(\sqrt{a_{i}})(\mu_{2}) \end{equation}
for any $i \in \{1, 2, 3\}$, and the Galois element corresponding to $-1 \in \SL(T_{2}(E))$ acts by taking $\sqrt{a_{i}}$ to $-\sqrt{a_{i}}$.

\end{prop}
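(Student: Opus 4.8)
Part~1 should follow formally from Proposition~\ref{prop: key lemma} together with the description (recalled just above) of $\rho_{2}(\Gal(\bar{K}/K(\mu_{2})))$ as $\SL(T_{2}(E))$. Since $\ker\rho_{2} = \Gal(\bar{K}/K_{\infty})$ and $K_{\infty} \supseteq K(\mu_{2})$, the map $\rho_{2}$ identifies $\Gal(K_{\infty}/K)$ with $G$, hence $\Gal(K_{\infty}/K(\mu_{2}))$ with $\SL(T_{2}(E))$, and by Proposition~\ref{prop: key lemma} it identifies $\Gal(K_{\infty}/K_{\infty}')$ with the group of scalar automorphisms in $G$. As $\rho_{2}$ is injective on $\Gal(K_{\infty}/K)$ it carries intersections of subgroups to intersections, so $\Gal(K_{\infty}/K_{\infty}'(\mu_{2}))$ is carried isomorphically onto the subgroup of scalars in $G$ of determinant $1$; since a scalar $r \in \zz_{2}^{\times}$ has determinant $r^{2}$, that subgroup is $\{\pm 1\}$. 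Thus $K_{\infty}'(\mu_{2})$ corresponds to $\{\pm 1\} \lhd \SL(T_{2}(E)) \cong \Gal(K_{\infty}/K(\mu_{2}))$, which is Part~1, and in particular $[K_{\infty} : K_{\infty}'(\mu_{2})] = 2$. So it remains only to show that $\sqrt{a_{i}}$ lies in $K_{\infty}$ but not in $K_{\infty}'(\mu_{2})$ and that the nontrivial element of $\Gal(K_{\infty}/K_{\infty}'(\mu_{2}))$ negates it.

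The key step is to show that $\sqrt{a_{i}}$ already lies in $K_{2} = K(E[4])$. For this I would fix distinct $j, j', j'' \in \{1, 2, 3\}$, choose square roots $s = \sqrt{\alpha_{j} - \alpha_{j'}}$ and $s' = \sqrt{\alpha_{j} - \alpha_{j''}}$ in $\bar{K}$, and set $\delta = s s'$. For a point $Q \in E[4]$ with $2Q = (\alpha_{j}, 0)$, the tangent line to $E$ at $Q$ meets $E$ at $Q$, $Q$ and $-2Q = (\alpha_{j}, 0)$; writing out this condition shows that $x(Q)$ is a root of $x^{2} - 2\alpha_{j} x + (\alpha_{j}\alpha_{j'} + \alpha_{j}\alpha_{j''} - \alpha_{j'}\alpha_{j''})$, i.e.\ $x(Q) = \alpha_{j} \pm \delta$, and substituting into the Weierstrass equation then yields $y(Q)^{2} = \delta^{2}(s + s')^{2}$ if $x(Q) = \alpha_{j} + \delta$ and $y(Q)^{2} = \delta^{2}(s - s')^{2}$ if $x(Q) = \alpha_{j} - \delta$. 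Since $Q \in E(K_{2})$ (as $K_{2} = K(E[4])$) and $\delta = x(Q) - \alpha_{j}$ is a nonzero element of $K_{2}$, both $s + s'$ and $s - s'$ lie in $K_{2}$, and therefore so does $s = \sqrt{\alpha_{j} - \alpha_{j'}}$. Since the pair $j, j'$ was arbitrary, $\sqrt{a_{i}} = \sqrt{\alpha_{i+1} - \alpha_{i+2}} \in K_{2} \subseteq K_{\infty}$ for every $i$.

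To conclude I would choose $\sigma \in \Gal(\bar{K}/K(\mu_{2}))$ with $\rho_{2}(\sigma) = -1$. Then $\rho_{2}(\sigma) \equiv 1 \pmod{2}$, so $\sigma$ fixes $E[2]$ and hence fixes $\alpha_{1}, \alpha_{2}, \alpha_{3}$, while it acts on $E[4]$ as $P \mapsto -P$; consequently $\sigma$ fixes $x(Q) = \alpha_{j} \pm \delta$ (and therefore fixes $\delta$) and negates $y(Q)$, which forces $\sigma(s + s') = -(s + s')$ and $\sigma(s - s') = -(s - s')$, i.e.\ $\sigma(s) = -s$. Hence the element of $\Gal(K_{\infty}/K(\mu_{2}))$ corresponding to $-1 \in \SL(T_{2}(E))$ sends $\sqrt{a_{i}}$ to $-\sqrt{a_{i}}$ for every $i$. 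Since $a_{i} \neq 0$, this element moves $\sqrt{a_{i}}$, so by Part~1 it follows that $\sqrt{a_{i}} \notin K_{\infty}'(\mu_{2})$; combining this with $\sqrt{a_{i}} \in K_{\infty}$ and $[K_{\infty} : K_{\infty}'(\mu_{2})] = 2$ gives $K_{\infty} = K_{\infty}'(\mu_{2})(\sqrt{a_{i}}) = K_{\infty}'(\sqrt{a_{i}})(\mu_{2})$, which is (\ref{main quadratic extension}).

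I expect the only substantive point to be the halving computation in the second paragraph, above all the identity $(\alpha_{j} - \alpha_{j'}) + (\alpha_{j} - \alpha_{j''}) \pm 2\delta = (s \pm s')^{2}$, which is what makes the \emph{single} square root $\sqrt{a_{i}}$ — rather than just the product $\sqrt{(\alpha_{j} - \alpha_{j'})(\alpha_{j} - \alpha_{j''})}$ — appear in $K_{2}$. The remaining arguments are formal consequences of Proposition~\ref{prop: key lemma} and the Weil-pairing description of $\SL(T_{2}(E)) = \rho_{2}(\Gal(\bar{K}/K(\mu_{2})))$.
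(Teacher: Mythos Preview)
Your proof is correct and follows the same overall strategy as the paper: use Proposition~\ref{prop: key lemma} together with the Weil-pairing identification $\rho_{2}(\Gal(\bar K/K(\mu_{2})))=\SL(T_{2}(E))$ to get $[K_{\infty}:K_{\infty}'(\mu_{2})]=2$, then exhibit $\sqrt{a_{i}}\in K_{2}$ as the missing generator. The one substantive difference is in how you show that $-1$ moves $\sqrt{a_{i}}$: the paper does this by explicitly computing $K_{2}'=K_{1}(\sqrt{a_{1}a_{2}},\sqrt{a_{2}a_{3}},\sqrt{a_{3}a_{1}})$ from the decoration formulas and observing $\sqrt{a_{i}}\in K_{2}\setminus K_{2}'$ (with $\Gal(K_{2}/K_{2}')=\langle -1\rangle$), whereas you bypass the description of $K_{2}'$ entirely by reading the action of $-1$ directly off the $y$-coordinates of $4$-torsion points. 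Your explicit halving computation also spells out what the paper records only as ``verified through direct calculation''; conversely, the paper's description of $K_{2}'$ is reused later (Theorem~\ref{prop: bounding K(2^n)}), so it is not wasted effort there.
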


\begin{proof}

If we replace $K$ with $K(\mu_{2})$, it will suffice to assume that $K$ contains all $2$-power roots of unity and to prove that $K_{\infty} = K_{\infty}'(\sqrt{a_{i}})$ for any $i \in \{1, 2, 3\}$ and that the Galois element corresponding to $-1 \in \SL(T_{2}(E))$ acts as claimed.

Since $T_{2}(E)$ has rank $2$, the determinant of any scalar $r \in \Aut_{\zz_{2}}(T_{2}(E))$ is $r^{2} \in \zz_{2}$.  Therefore, the only scalar automorphisms in $\SL(T_{2}(E))$ are $\pm 1$.  Proposition \ref{prop: key lemma} then implies that the image under $\rho_{2}$ of $\Gal(K_{\infty} / K_{\infty}')$ coincides with $\{\pm 1\} \lhd \mathrm{SL}(T_{2}(E))$.

It immediately follows that $K_{\infty}$ is generated over $K_{\infty}'$ by any element of $K_{\infty}$ which is not fixed by the Galois automorphism $\sigma$ such that $\rho_{2}(\sigma) = -1 \in \mathrm{SL}(T_{2}(E))$.  Clearly, $\bar{\rho}_{2}^{(2)}(\sigma |_{K_{2}}) = -1 \in \Gamma(2) / \Gamma(4)$.  But setting $n = 2$ in the statement of Proposition \ref{prop: key lemma} implies that
\begin{equation} \label{Gal(K_{2}/K_{2}')} \mathrm{Gal}(K_{2} / K_{2}') \cong \{\pm 1\} < \Gamma(2) / \Gamma(4), \end{equation}
so any element in $K_{2} \setminus K_{2}'$ will not be fixed by $-1$.  One checks using the formulas for $a_{\Lambda}$ with $v \in |\mathcal{L}|_{2}$ and using Proposition \ref{prop: field of definition} for $n = 2$ that 
\begin{equation}\label{description of K(4)'} K_{2}' = K_{1}(\sqrt{a_{1}a_{2}}, \sqrt{a_{2}a_{3}}, \sqrt{a_{3}a_{1}}). \end{equation}
However, it can be verified through direct calculation that 
\begin{equation}\label{description of K(4)} K_{2} = K_{1}(\sqrt{\alpha_{1}}, \sqrt{\alpha_{2}}, \sqrt{\alpha_{3}}). \end{equation}
Therefore, for each $i$, $\sqrt{a_{i}} \in K_{2} \setminus K_{2}'$, so $\sqrt{a_{i}}$ can be used to generate $K_{\infty}$ over $K_{\infty}'$.  Moreover, since $a_{i} \in K_{1} \subset K_{\infty}'$, it follows that $-1$ sends $\sqrt{a_{i}}$ to $-\sqrt{a_{i}}$.

\end{proof}

\section{Some applications}

\begin{thm}\label{prop: x-coordinates}

Write $K(x(E[2^{n}]))$ (resp. $K(x(E[2^{\infty}]))$) for the extension of $K$ obtained by adjoining the $x$-coordinates of all elements of $E[2^{n}]$ (resp. $E[2^{\infty}]$).  Then 

a) $K_{n} = K(x(E[2^{n}]))(\sqrt{a_{i}})$ for all $n \geq 2$ and $i = 1, 2, 3$; 

b) $K_{n}'(\zeta_{2^{n}}) \subseteq K(x(E[2^{n}]))$ for all $n \geq 1$; and 

c) $K_{\infty}'(\mu_{2}) = K(x(E[2^{\infty}]))$.

\end{thm}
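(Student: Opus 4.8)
The plan is to reduce the whole statement to one linear-algebra observation about which automorphisms of $E[2^{n}]$ fix all $x$-coordinates, and then to read off the three parts by combining this with Proposition~\ref{prop: key lemma}, Proposition~\ref{prop: main quadratic extension}, and the standard fact that $\mu_{2^{n}} \subseteq K_{n}$ with Galois acting on $\mu_{2^n}$ through $\det \bar{\rho}_{2}^{(n)}$. The observation I would record first is: for $n \geq 2$, an element $\bar{s} \in \GL(E[2^{n}])$ satisfies $\bar{s}(P) \in \{P, -P\}$ for all $P \in E[2^{n}]$ if and only if $\bar{s} = \pm 1$ (for the nontrivial direction choose a basis $P, Q$, write $\bar{s}(P) = \varepsilon_{1}P$, $\bar{s}(Q) = \varepsilon_{2}Q$, $\bar{s}(P+Q) = \varepsilon_{3}(P+Q)$ with each $\varepsilon_{i} \in \{\pm 1\}$, and use linear independence to force $\varepsilon_{1} = \varepsilon_{2} = \varepsilon_{3}$). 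Since two points of $E$ have the same $x$-coordinate exactly when they are negatives of one another, and since $\bar{\rho}_{2}^{(n)}$ is faithful on $\Gal(K_{n}/K)$ with the scalar $-1 \in \bar{G}^{(n)}$ for $n \geq 2$, this identifies $K(x(E[2^{n}]))$ (for $n\geq2$) as the fixed field of the order-$2$ subgroup $\{1, \sigma_{n}\} \leq \Gal(K_{n}/K)$, where $\sigma_{n}$ is the unique Galois element with $\bar{\rho}_{2}^{(n)}(\sigma_{n}) = -1$; in particular $[K_{n} : K(x(E[2^{n}]))] = 2$, whereas $K(x(E[2])) = K(\alpha_{1}, \alpha_{2}, \alpha_{3}) = K_{1}$.

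For part (b) I would argue as follows. Since $-1$ is a scalar in $\bar{G}^{(n)}$, Proposition~\ref{prop: key lemma} puts $\sigma_{n}$ in $\Gal(K_{n}/K_{n}')$, so $\sigma_{n}$ fixes $K_{n}'$; and $\sigma_{n}$ fixes $\zeta_{2^{n}}$ because it acts on $\mu_{2^n}\subseteq K_n$ through $\det(-1) = 1$. Hence $\sigma_{n}$ fixes $K_{n}'(\zeta_{2^{n}})$, which therefore lies in the fixed field $K(x(E[2^{n}]))$ for $n \geq 2$; the case $n = 1$ is just $K_{1}'(\zeta_{2}) = K_{1} = K(x(E[2]))$, using $K_{1}' = K_{1}$ from Proposition~\ref{prop: field of definition of isogenies}(b).

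For part (a), recall from the proof of Proposition~\ref{prop: main quadratic extension} that $\sqrt{a_{i}} \in K_{2} \subseteq K_{n}$ while $a_{i} \in K_{1} \subseteq K_{n}'$, and that the element of $\Gal(K_{\infty}/K(\mu_{2}))$ corresponding to $-1 \in \SL(T_{2}(E))$ sends $\sqrt{a_{i}}$ to $-\sqrt{a_{i}}$; since its restriction to $K_{n}$ has $\bar{\rho}_{2}^{(n)}$-image $-1$, that restriction is exactly $\sigma_{n}$. So $\sigma_{n}$ fixes $K(x(E[2^{n}]))$ but moves $\sqrt{a_{i}}$, whence $\sqrt{a_{i}} \notin K(x(E[2^{n}]))$ and $K(x(E[2^{n}]))(\sqrt{a_{i}})$ is a quadratic subextension of $K_{n}/K(x(E[2^{n}]))$; by the degree count of the first paragraph it equals $K_{n}$. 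For part (c), taking unions over $n$ in part (b) gives $K_{\infty}'(\mu_{2}) = \bigcup_{n} K_{n}'(\zeta_{2^{n}}) \subseteq \bigcup_{n} K(x(E[2^{n}])) = K(x(E[2^{\infty}]))$; conversely, by Proposition~\ref{prop: main quadratic extension} the group $\Gal(K_{\infty}/K_{\infty}'(\mu_{2}))$ has order $2$ with nontrivial element acting as $-1$ on $T_{2}(E)$, hence as $-1$ on each $E[2^{n}]$, hence fixing every $x$-coordinate of every $2$-power torsion point; this gives the reverse inclusion and therefore equality.

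The only step that is not pure bookkeeping is the linear-algebra observation of the first paragraph, together with its translation into the identification of $K(x(E[2^{n}]))$ as the fixed field of $\{\pm 1\}$ and the resulting degree count; I expect that to be the heart of the proof. The small exceptional cases at $n = 1$ (where $-1$ acts trivially on $E[2]$, so the twist by $\sqrt{a_{i}}$ degenerates) must be checked separately, as indicated above.
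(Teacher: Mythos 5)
Your proposal is correct and follows essentially the same route as the paper: identify $K(x(E[2^{n}]))$ as the fixed field of the scalar subgroup $\{\pm 1\}$ via the observation that only $\pm 1$ sends every $P$ to $\pm P$, then deduce (a) from the behavior of $\sqrt{a_{i}}$ under $-1$ and (b), (c) from Proposition~\ref{prop: key lemma} and Proposition~\ref{prop: main quadratic extension}. Your write-up is in fact slightly more careful than the paper's, in that you spell out the linear-algebra step and handle the degenerate case $n=1$ explicitly.
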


\begin{proof}

The Galois equivariance of the Weil pairing implies that $K_{n} \supset K(\zeta_{2^{n}})$ for each $n \geq 0$ and that $K_{\infty} \supset K(\mu_{2})$.  Choose a basis of $T_{2}(E)$, so that we may identify $\Gal(K_{\infty} / K)$ with $\GL_{2}(\zz_{2})$.  For any $n \geq 1$, the subgroup of $\mathrm{Gal}(K_{n} / K)$ which fixes the $x$-coordinates of the points in $E[2^{n}]$ coincides with the subgroup whose image under $\bar{\rho}_{2}^{(n)}$ is identified with the matrices in $\mathrm{GL}_{2}(\zz_{2})$ which send each point $P \in E[2^{n}]$ either to $P$ or to $-P$.  The only such matrices are the scalar matrices $\pm 1$.  Thus, $K(x(E[2^{n}]))$ is the subextension of $K_{n}$ fixed by $\{\pm 1\} \lhd \mathrm{GL}_{2}(\zz / 2^{n}\zz)$, and similarly, $K(x(E[2^{\infty}]))$ is the subextension of $K_{\infty}$ fixed by $\{\pm 1\} \lhd \mathrm{GL}(T_{2}(E))$.   As in the proof of Proposition \ref{prop: main quadratic extension}, $\sqrt{a_{i}}$ lies in $K_{2} \subseteq K_{n}$ and is not fixed by $-1$ for each $i$, hence the statement of part (a).  Parts (b) and (c) then immediately follow from Proposition \ref{prop: key lemma}.

\end{proof}

The description of $K_{\infty}'$ given in the statement of Theorem \ref{thm: main} provides us with recursive formulas for the generators of $K_{n}'$ for each $n \geq 0$.  We will not similarly obtain formulas for the generators of each extension $K_{n}$, but the above results do give us a way of ``bounding" each $K_{n}$, as follows:

\begin{thm}\label{prop: bounding K(2^n)}

For each $n \geq 2$ and $i = 1, 2, 3$,
$$K_{n}'(\sqrt{a_{i}}, \zeta_{2^{n}}) \subseteq K_{n} \subsetneq K_{n + 1}'(\sqrt{a_{i}}, \zeta_{2^{n + 1}}),$$
where the first inclusion is an equality if and only if $n = 2$.  Furthermore, $[K_{n} : K_{n}'(\sqrt{a_{1}}, \zeta_{2^{n}})] = 2$ for $n \geq 3$, and $[K_{n + 1}'(\sqrt{a_{1}}, \zeta_{2^{n + 1}}) : K_{n}] = 4$ for $n \geq 2$.

\end{thm}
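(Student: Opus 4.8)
The plan is to deduce the first inclusion directly from Theorem \ref{prop: x-coordinates} and to obtain everything else by translating the fields in sight into subgroups of $\Gal(K_{\infty}/K)$. Throughout we use the standing hypothesis that $K$ contains all $2$-power roots of unity, so that (by the remark preceding Proposition \ref{prop: main quadratic extension}, applied with $K = K(\mu_{2})$) we have $\Gal(K_{\infty}/K) = \SL(T_{2}(E))$; after fixing a $\zz_{2}$-basis of $T_{2}(E)$ we write $\Gamma := \SL_{2}(\zz_{2})$ and, for $m \geq 1$, $\Gamma(2^{m})$ for the kernel of $\Gamma \twoheadrightarrow \SL_{2}(\zz/2^{m}\zz)$; in particular the roots of unity $\zeta_{2^{n}}, \zeta_{2^{n+1}}$ in the statement already lie in $K$. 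For the first inclusion, Theorem \ref{prop: x-coordinates}(a) and (b) give $K_{n}'(\sqrt{a_{i}}, \zeta_{2^{n}}) \subseteq K(x(E[2^{n}]))(\sqrt{a_{i}}) = K_{n}$, using that $\sqrt{a_{i}} \in K_{2} \subseteq K_{n}$ (shown in the proof of Proposition \ref{prop: main quadratic extension}).

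First I would record the three relevant subgroups of $\Gamma$. We have $\Gal(K_{\infty}/K_{n}) = \Gamma(2^{n})$; and by Proposition \ref{prop: key lemma}, $\Gal(K_{\infty}/K_{n}') = Z_{n} := \{g \in \Gamma : g \bmod 2^{n} \mbox{ is a scalar matrix}\}$, with $\Gamma(2^{n}) \lhd Z_{n}$ and $Z_{n}/\Gamma(2^{n})$ the group of scalars in $\SL_{2}(\zz/2^{n}\zz)$, of order $1$, $2$, $4$ according as $n = 1$, $n = 2$, $n \geq 3$ (for $n \geq 3$ the scalars are $\{\pm 1, \pm(1 + 2^{n-1})\}$). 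Next, let $\chi : \Gal(K_{\infty}/K_{1}) \to \{\pm 1\}$ be the Kummer character of $a_{1}$, so $\sigma(\sqrt{a_{1}}) = \chi(\sigma)\sqrt{a_{1}}$. Since $\sqrt{a_{1}} \in K_{2}$, $\chi$ factors through $\Gal(K_{2}/K_{1}) = \Gamma(2)/\Gamma(4)$, so $\chi(g)$ depends only on $g \bmod 4$, and by Proposition \ref{prop: main quadratic extension} we have $\chi(-I) = -1$; hence for a scalar $g$ with $g \equiv rI \bmod 4$ one has $\chi(g) = 1$ if and only if $r \equiv 1 \bmod 4$. Finally, by the formula $K_{2}' = K_{1}(\sqrt{a_{1}a_{2}}, \sqrt{a_{2}a_{3}}, \sqrt{a_{3}a_{1}})$ from the proof of Proposition \ref{prop: main quadratic extension} we have $K_{n}'(\sqrt{a_{1}}) = K_{n}'(\sqrt{a_{i}})$ for all $i$, so it suffices to treat $i = 1$; and since $-I \in Z_{n}$ moves $\sqrt{a_{1}}$, the field $K_{n}'(\sqrt{a_{1}})$ corresponds to the index-$2$ subgroup $Z_{n}^{+} := \ker(\chi|_{Z_{n}}) = \{g \in \Gamma : g \equiv rI \bmod 2^{n} \mbox{ with } r \equiv 1 \bmod 4\}$.

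With these in hand the index computations are the heart of the matter. For $n \geq 2$ we have $\Gamma(2^{n}) \subseteq \Gamma(4) \subseteq \ker\chi$ and $\Gamma(2^{n}) \subseteq Z_{n}$, so $\Gamma(2^{n}) \subseteq Z_{n}^{+}$ and therefore
$$[K_{n} : K_{n}'(\sqrt{a_{1}})] = [Z_{n}^{+} : \Gamma(2^{n})] = \frac{1}{2}[Z_{n} : \Gamma(2^{n})],$$
which is $1$ for $n = 2$ and $2$ for $n \geq 3$; this gives both that the first inclusion is an equality precisely when $n = 2$ and that $[K_{n} : K_{n}'(\sqrt{a_{1}}, \zeta_{2^{n}})] = 2$ for $n \geq 3$. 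For the second inclusion, fix $n \geq 2$. The four square roots of $1$ in $(\zz/2^{n+1}\zz)^{\times}$ are $\{1, -1, 1 + 2^{n}, -1 - 2^{n}\}$, of which exactly $1$ and $1 + 2^{n}$ are $\equiv 1 \bmod 4$, and both are $\equiv 1 \bmod 2^{n}$; hence $\Gamma(2^{n+1}) \subseteq Z_{n+1}^{+} \subseteq \Gamma(2^{n})$, so $K_{n} \subseteq K_{n+1}'(\sqrt{a_{1}})$. To get the degree I would compute inside $\Gamma(2^{n})/\Gamma(2^{n+1})$: writing $g = I + 2^{n}M$ and using $\det(I + 2^{n}M) \equiv 1 + 2^{n}\,\mathrm{tr}(M) \bmod 2^{n+1}$, this quotient is identified with the group of trace-zero matrices in $M_{2}(\mathbb{F}_{2})$, of order $8$, while $Z_{n+1}^{+}/\Gamma(2^{n+1})$ corresponds to $\{0, I\} \subseteq M_{2}(\mathbb{F}_{2})$, of order $2$; so $[\Gamma(2^{n}) : Z_{n+1}^{+}] = 4$, giving $[K_{n+1}'(\sqrt{a_{1}}, \zeta_{2^{n+1}}) : K_{n}] = 4$ and in particular the strict inclusion $K_{n} \subsetneq K_{n+1}'(\sqrt{a_{1}}, \zeta_{2^{n+1}})$.

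I expect the main difficulty to be the $2$-adic arithmetic rather than anything structural: correctly enumerating the square roots of unity modulo $2^{n}$ and $2^{n+1}$, and determining, via the congruence $r \equiv 1 \bmod 4$ forced by $\chi(-I) = -1$, which scalars fix $\sqrt{a_{i}}$. This is exactly what makes $n = 2$ exceptional -- there the scalars modulo $4$ are only $\{\pm 1\}$, so the ``extra'' scalar $1 + 2^{n-1}$ has not yet appeared -- which is why the first inclusion collapses to an equality at $n = 2$ and not for larger $n$, and the cases $n = 2$ and $n \geq 3$ must be separated. A secondary point requiring care is the identification of $\Gamma(2^{n})/\Gamma(2^{n+1})$ with the trace-zero matrices over $\mathbb{F}_{2}$ and the verification that the class of $I$ (equivalently the scalar $1 + 2^{n}$) satisfies the determinant-one constraint defining that subgroup.
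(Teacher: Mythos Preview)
Your proof is correct and follows essentially the same route as the paper's: both arguments translate the field inclusions into containments of subgroups of $\SL_{2}(\zz_{2})$ via Proposition \ref{prop: key lemma} and then compute indices using the explicit description of the scalar square roots of $1$ modulo $2^{n}$. The only notable difference is that you compute $|\Gamma(2^{n})/\Gamma(2^{n+1})| = 8$ directly via the identification with trace-zero matrices in $M_{2}(\mathbb{F}_{2})$, whereas the paper obtains this by citing \cite{sato2010abelianization}; your version is thus slightly more self-contained.
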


\begin{proof}

Fix $n \geq 2$ and $i \in \{1, 2, 3\}$.  Proposition \ref{prop: field of definition of isogenies}(b) and the inclusion $K_{n} \supset K(\zeta_{2^{n}})$ imply that $K_{n}' \subset K_{n}$, and, as in the proof of Proposition \ref{prop: main quadratic extension}, $\sqrt{a_{i}} \in K_{2} \subseteq K_{n}$, thus implying the first inclusion.  For $n = 2$, it has already been shown in the proof of Proposition \ref{prop: main quadratic extension} that the inclusion is an equality.  Since $\sqrt{a_{i}} \notin K_{\infty}'(\mu_{2})$, it follows that $\sqrt{a_{i}} \notin K(2^{i})'(\zeta_{2^{i}})$ for any positive integer $i$.  Note that by Proposition \ref{prop: key lemma}, for $n \geq 3$, $\mathrm{Gal}(K_{n} / K_{n}'(\zeta_{2^{n}}))$ is identified with the subgroup $\{\pm 1, \pm (2^{n - 1} + 1) \} \lhd \mathrm{SL}_{2}(\zz / 2^{n}\zz)$, which is of order $4$.  It follows that the degree of the first inclusion is $2$ in this case.  Equivalently, for all $n \geq 2$, $K_{n + 1} \supset K_{n + 1}'(\sqrt{a_{1}}, \zeta_{2^{n + 1}})$ is an extension of degree $2$.  We have (via $\bar{\rho}_{2}^{(n + 1)}$) the following identifications: 
\begin{equation}\label{bounding K(2^n) identification 1} \mathrm{Gal}(K_{n + 1} / K) \cong G(2) / G(2^{n + 1}), \end{equation}
\begin{equation}\label{bounding K(2^n) identification 2} \mathrm{Gal}(K_{n + 1} / K_{n + 1}'(\zeta_{2^{n + 1}}) \cong \langle -1, 2^{n} + 1 \rangle \lhd G(2) / G(2^{n + 1}). \end{equation}
These imply that $\mathrm{Gal}(K_{n + 1} / K_{n + 1}'(\sqrt{a_{i}}, \zeta_{2^{n + 1}}))$ is a subgroup of $\langle -1, 2^{n} + 1 \rangle$ of order $2$.  Since $2^{n} + 1$ fixes all of $K(4)$, which includes the element $\sqrt{a_{i}}$, we have $\mathrm{Gal}(K_{n + 1} / K_{n + 1}'(\sqrt{\alpha_{i}}, \zeta_{2^{n + 1}})) \cong \langle 2^{n} + 1 \rangle \lhd \Gamma(2) / \Gamma(2^{n + 1})$.  But this subgroup also leaves $K_{n}$ fixed, whence the second inclusion $K_{n} \subset K_{n + 1}'(\sqrt{a_{i}}, \zeta_{2^{n + 1}})$.  The fact that $[K_{n + 1}'(\sqrt{a_{i}}) : K_{n}] = 4$ follows quickly from the fact that $\mathrm{Gal}(K_{n + 1} / K_{n}) \cong (G(n) \cap \SL(T_{2}(E)) / G(2^{n + 1})$ is isomorphic to $(\zz / 2\zz)^{3}$ (see the proof of Corollary 2.2 of \cite{sato2010abelianization}) and therefore has order $8$.

\end{proof}

\section*{Acknowledgements}

The author would like to thank Yuri Zarhin and Mihran Papikian for their patience, and for insightful discussions which were extremely helpful in producing this work.

\bibliographystyle{plain}
\bibliography{bibfile}

\end{document}